\newtheorem{theorem}{Theorem}[section]
\newtheorem{lemma}[theorem]{Lemma}
\newtheorem{proposition}[theorem]{Proposition}
\newtheorem{corollary}[theorem]{Corollary}
\theoremstyle{remark}
\newtheorem{remark}[theorem]{\bf Remark}
\newtheorem{example}[theorem]{\bf Example}
\def\intR{\mathrm{Int^R}}
\def\int{\mathrm{Int}}
\begin{document}
\selectlanguage{english}
\title[On the Krull dimension of $\intR(E,D)$]{On the Krull dimension of rings of integer-valued rational functions}

\author[M. M. Chems-Eddin]{M. M. Chems-Eddin}
\address{Mohamed Mahmoud CHEMS-EDDIN: Department of Mathematics, Faculty of Sciences Dhar El Mahraz, Sidi Mohamed Ben Abdellah University, Fez,  Morocco}
\email{2m.chemseddin@gmail.com}
 
\author[B. Feryouch]{B. Feryouch}
\address{Badr Feryouch: Department of Mathematics, Faculty of Sciences Dhar El Mahraz, Sidi Mohamed Ben Abdellah University, Fez,  Morocco}
\email{badr.feryouch@usmba.ac.ma}

\author[H. Mouanis]{H.  Mouanis}
\address{Hakima  Mouanis: Department of Mathematics, Faculty of Sciences Dhar El Mahraz, Sidi Mohamed Ben Abdellah University, Fez,  Morocco}
\email{hmouanis@yahoo.fr}

\author[A. Tamoussit]{A. Tamoussit}
\address{Ali TAMOUSSIT: Department of Mathematics, The Regional Center for Education and Training Professions Souss-Massa,  Inezgane, Morocco}
\email{a.tamoussit@crmefsm.ac.ma ; tamoussit2009@gmail.com}

\subjclass[2020]{13C15, 13F05, 13F20.}
\keywords{Integer-valued  rational functions, Krull dimension, Jaffard domain, Pseudo-valuation domain.}
	
\maketitle
\begin{abstract} 
Let $D$ be an integral domain with quotient field $K$ and $E$ a subset of $K$. The \textit{ring of integer-valued  rational functions on} $E$ is defined as $$\intR(E,D):=\lbrace \varphi \in K(X);\; \varphi(E)\subseteq D\rbrace.$$ The main goal of this paper is to investigate the Krull dimension of the ring $\intR(E,D).$ Particularly, we are interested in domains that are either Jaffard or PVDs. Interesting results are established with some illustrating examples.
\end{abstract}
	
\section*{Introduction}

The Krull dimension of a commutative ring $R$, denoted by $\dim(R)$, is defined as the supremum of the length of finite chains of prime ideals in $R$. Notably, the Krull dimension plays a pivotal role in both commutative algebra and algebraic geometry. Originally, this concept goes back to the works of Noether and Krull in the 1920s. Specifically, in 1937, Krull proposed the previous definition, which bears his name as a tribute to his contributions. This definition was justified by the accumulated evidence presented by Noether \cite{N23} for factor rings of polynomial rings and by R\"{u}ckert \cite{Ru32} for factor rings of power series rings. For a historical introduction to dimension theory, we refer the reader to  Chapter 8 of Eisenbud's book \cite{Eis95}. More recently, a constructive approach to studying the concept of Krull dimension of commutative rings has been developed in \cite[Chapter XIII]{LQ15}.

 \medskip

Given an integral domain $D$ with quotient field $K$. We denote by $\mathrm{Int}(D)$ the set of all polynomials with coefficients in $K$ that take values (on elements of $D$) in $D$, i.e., $$\mathrm{Int}(D)= \{f\in K[X];\; f(D)\subseteq D\}.$$
This set forms an integral domain lying between $D[X]$ and $K[X]$, and known as \textit{the ring of integer-valued polynomials over} $D$. More generally, the \textit{ring of integer–valued polynomials on a subset} $E$ of $K$ is defined by $$\mathrm{Int}(E,D)= \{f\in K[X];\; f(E)\subseteq D\}.$$ Obviously, $D\subseteq\mathrm{Int}(E,D)\subseteq K[X]$ and $\mathrm{Int}(D,D)=\mathrm{Int}(D).$

\medskip

In the definition of $\mathrm{Int}(E,D)$, if we replace $K[X]$ with $K(X)$, we obtain a new class of rings denoted by $\intR(E,D)$, which is called the \textit{ring of integer-valued rational functions on} $E$ \textit{over} $D,$ that is, $$\intR(E,D):=\{f\in K(X);\; f(E)\subseteq D\}.$$
 It is clear that  $\intR(E,D)$ is a $D$-algebra, $\int(E,D)\subseteq\intR(E,D)\subseteq K(X),$ $\intR(E,D)\cap K[X]=\int(E,D)$ and $\intR(E,D)\cap K=D$. We refer the reader to Chapter X of the book \cite{C97} for some background on these rings.
 
\medskip

At this stage, it is important to note that $\intR(D)$ and $\int(D)$ coincide for a specific class of integral domains known as $d$-rings, see for example   \cite{GM76,Mc78}. More precisely, an integral domain $D$ is defined as a $d$-\textit{ring} if, for any two nonzero polynomials $f$ and $g$ with coefficients in $D,$ such that $g(a)$ divides $f(a)$ for almost all elements $a$ in $D$, then $g$ divides $f$ in $K[X],$ where $K$ denotes the quotient field of $D.$ This definition is, in fact, equivalent to the equality $\intR(D)=\int(D)$ (see \cite[Definition VII.2.1]{C97} and the subsequent comment that follows it). The concept of $d$-rings was originally introduced and studied by Gunji and McQuillan in \cite{GM76}, notably, they proved that the ring of integers $\mathbb{Z}$ and the polynomial ring in any number of indeterminates over an integral domain are $d$-rings. They also pointed out that any integral domain with a nonzero Jacobson radical, including semi-local domains, is not a $d$-ring.

\medskip

Before continuing, it is convenient to recall some additional  concepts. The \textit{valuative dimension} of an integral domain $D$, denoted by $\dim_v(D)$, is defined to be the supremum of the Krull dimensions of the valuation overrings of $D$. Notice that  $\dim(D)\leqslant\dim_v(D),$ and when these two dimensions are equal and finite $D$ is called a \textit{Jaffard domain}. In the finite Krull dimensional case, Noetherian domains and Pr\"ufer domains are examples of Jaffard domains. It is known that for any integral domain $D$ and any overring $T$ of an integral domain $D,$ we have $\dim_v(D[X])=1+\dim_v(D)$ and $\dim_v(T)\leqslant\dim_v(D).$ For further details on valuative dimension and Jaffard domains, we refer the reader to \cite[Chapitre IV]{J60} and \cite{ABDFK88}, respectively.

\medskip

In \cite{S53}, Seidenberg established that for any integral domain $D$, the inequality $1 + \dim(D) \leqslant \dim(D[X]) \leqslant 1 + 2\dim(D)$ holds. Furthermore, he showed that $\dim(D[X]) = 1 + \dim(D)$ when $D$ is a Noetherian domain. This equality extends to a more general class of integral domains, specifically Jaffard domains (see, for example, \cite[Remark 1.3\rm(c)]{ABDFK88}).

\medskip

We now present some well-known results related to rings of integer-valued polynomials. In \cite[Propositions V.1.5 and V.1.6]{C97}, Cahen and Chabert established that for any integral domain $D,$ we have $\dim(\int(D))\geqslant\mathrm{max}\{1+\dim(D),\dim(D[X])-1\}.$ They also showed that for any fractional subset $E$ of a Jaffard domain $D,$ the Krull dimension of $\int(E,D)$ is equal to $1+\dim(D)$ (\cite[Proposition V.1.8]{C97}). Additionally, as asserted in \cite[Exercise V.5.(ii)]{C97}, the valuative dimension satisfies the equality $\dim_v(\int(E, D))=1+\dim_v(D)$ for any fractional subset $E$ of an arbitrary integral domain $D.$

\medskip

Tartarone \cite{T97} provided significant insights into the Krull dimension of $\int(D)$ and, more generally $\int(E,D),$  for integral domains $D$  arising from pullback diagrams, including (locally) pseudo-valuation domains. Notably, she proved that for any pseudo-valuation domain $D$ with finite residue field and for any fractional subset $E$ of $D,$ we have  $\dim(\mathrm{Int}(E,D))=1+\dim(D).$ 

\medskip

Later, in \cite[Theorem 2.1]{FK04}, Fontana and Kabbaj \cite{FK04} proved that $\dim(\mathrm{Int}(D))=\dim(D[X])$ for any locally essential domain $D$.

\medskip

More recently, in \cite{COT23}, the authors established that for certain Jaffard-like domains $D,$ and under the assumption that $E$ is residually cofinite with $D,$ the Krull dimension of any ring lying between $D[X]$ and $\mathrm{Int}(E,D)$ is equal to the Krull dimension of $D[X].$ This result indeed extends \cite[Theorem 2.1]{FK04}.

\medskip

Lastly, it is worth noting that rings of integer-valued rational functions have been studied less extensively than integer-valued polynomials, particularly regarding their Krull dimension. In fact, as far as we know, the only known result about the Krull dimension is that $\dim(\intR(D))\geqslant1+\dim(D),$ with equality if $D$ is a Jaffard domain; see \cite[Exercise X.1]{C97}.


\medskip	 	

The aim of this paper is to investigate the Krull dimension of the ring $\intR(E,D)$. Among other things, we prove that for any fractional subset $E$ of an integral domain $D$, we have that $D$ is Jaffard if and only if $\intR(E,D)$ is a Jaffard domain and the Krull dimension equal to  $1+\dim(D)$ (Theorem \ref{dk}). Moreover, we show that $1+\dim(D)\leqslant\dim(\intR(E,D))\leqslant1+\dim_v(B)$ if $D\subset B$ is a pair of integral domains sharing a common non zero ideal $I$ such that $D/I$ is finite and $E$ is a fractional subset of $D$ contained in $B$ (Theorem \ref{Psv}). In particular, as an application of this last inequality, we deduce that the equality $\dim(\intR(E,D))=1+\dim(D)$ holds when $D$ is a pseudo-valuation domain with finite residue field and $E$ is a fractional subset of $D$ contained in the associated valuation domain of $D$ (Corollary \ref{CorPVD}).
	  
\medskip

Throughout this paper, $D$ is an integral domain of quotient field $K$ and $E$ is a nonempty subset of $K$. Moreover, the symbols $\subset$ and $\subseteq$ denote proper containment and large containment, respectively. 
 	  	 
\section{Main results and examples}
We start this section by recalling the following basic facts :

\begin{enumerate}
\item[\textbf{Fact 1:}]  If $D\subseteq B$ are two integral domains with the same quotient field $K$ and  $E$  a subset of $K$, then $\intR(E,D)\subseteq \intR(E,B)$.
\item[\textbf{Fact 2:}]  $\intR(E,D)$ is an overring of $D[X]$ if and only if $E$ is contained in $D$.
\item[\textbf{Fact 3:}]  For any prime ideal $\mathfrak{p}$ of $D$ and for any element $a$ of $E$,   $\mathfrak{P}_{\mathfrak{p},a}:=\lbrace \varphi \in \intR(E,D);\; \varphi(a) \in \mathfrak{p} \rbrace$  is a prime ideal of $\intR(E,D)$ above $\mathfrak{p}.$
\end{enumerate}

\medskip

Recall that a subset $E$ of $K$ is said to be a \textit{fractional subset} of $D$ if  there exists a nonzero element $d$ of $D$ such that $dE\subseteq D.$\\
The next result gives some bounds about the Krull dimension of $\intR(E,D)$.
\begin{proposition}\label{kd}
We have $\dim(\intR(E,D))\geqslant \dim(D)$, and if moreover $E$ is a fractional subset of $D$ then $\dim(\intR(E,D))\geqslant 1 + \dim(D)$.
\end{proposition}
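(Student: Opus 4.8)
The plan is to realize an arbitrary chain of primes of $D$ inside $\intR(E,D)$ by means of the evaluation construction recorded in Fact~3, and then, in the fractional case, to lengthen that chain by one step at the bottom.

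First I would fix an element $a\in E$ (legitimate since $E$ is nonempty) and start from any chain $\mathfrak{p}_0\subsetneq\mathfrak{p}_1\subsetneq\cdots\subsetneq\mathfrak{p}_n$ of prime ideals of $D$. By Fact~3 each $\mathfrak{P}_{\mathfrak{p}_i,a}=\{\varphi\in\intR(E,D);\ \varphi(a)\in\mathfrak{p}_i\}$ is a prime ideal of $\intR(E,D)$ lying over $\mathfrak{p}_i$, and the inclusion $\mathfrak{p}_i\subseteq\mathfrak{p}_{i+1}$ immediately gives $\mathfrak{P}_{\mathfrak{p}_i,a}\subseteq\mathfrak{P}_{\mathfrak{p}_{i+1},a}$. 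Strictness is the one thing to check: picking $x\in\mathfrak{p}_{i+1}\setminus\mathfrak{p}_i$ and viewing $x$ as a constant function in $\intR(E,D)$, we have $x(a)=x\in\mathfrak{p}_{i+1}$ but $x(a)=x\notin\mathfrak{p}_i$, so $x\in\mathfrak{P}_{\mathfrak{p}_{i+1},a}\setminus\mathfrak{P}_{\mathfrak{p}_i,a}$. (Conceptually, the $\mathfrak{P}_{\mathfrak{p}_i,a}$ are the contractions of the $\mathfrak{p}_i$ along the surjective evaluation homomorphism $\varphi\mapsto\varphi(a)$, which restricts to the identity on $D\subseteq\intR(E,D)$.) This already yields $\dim(\intR(E,D))\geqslant n$, and since the chain in $D$ was arbitrary, $\dim(\intR(E,D))\geqslant\dim(D)$.

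For the second inequality I would invoke the fractional hypothesis: choose $d\in D\setminus\{0\}$ with $dE\subseteq D$ and consider $\varphi:=d(X-a)=dX-da$. For every $e\in E$ one has $\varphi(e)=de-da\in D$, since $de\in D$ (as $dE\subseteq D$) and $da\in D$ (as $a\in E$); hence $\varphi\in\intR(E,D)$. Moreover $\varphi\neq 0$ because $d\neq 0$, while $\varphi(a)=0\in\mathfrak{p}_0$, so $\varphi\in\mathfrak{P}_{\mathfrak{p}_0,a}\setminus\{0\}$ and therefore $(0)\subsetneq\mathfrak{P}_{\mathfrak{p}_0,a}$. Prepending $(0)$ to the chain $\mathfrak{P}_{\mathfrak{p}_0,a}\subsetneq\mathfrak{P}_{\mathfrak{p}_1,a}\subsetneq\cdots\subsetneq\mathfrak{P}_{\mathfrak{p}_n,a}$ produces a chain of length $n+1$ in $\intR(E,D)$; taking $n=\dim(D)$ (when $\dim(D)<\infty$; the infinite case is trivial, as the same constructions yield arbitrarily long chains) gives $\dim(\intR(E,D))\geqslant 1+\dim(D)$. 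The only genuinely substantive point in the whole argument is the membership $d(X-a)\in\intR(E,D)$: it is precisely the fractionality of $E$ that makes $d(X-a)$ integer-valued on $E$, and hence what allows the lifted chain to be extended one step below $\mathfrak{P}_{\mathfrak{p}_0,a}$; primality of the $\mathfrak{P}_{\mathfrak{p}_i,a}$ and strictness of the inclusions are immediate from Fact~3 together with the presence of the constants $D$ in $\intR(E,D)$.
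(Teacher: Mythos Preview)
Your proof is correct and follows essentially the same route as the paper: lift a chain in $D$ via the evaluation primes $\mathfrak{P}_{\mathfrak{p}_i,a}$ of Fact~3, and in the fractional case use the nonzero element $dX-da\in\mathfrak{P}_{\mathfrak{p}_0,a}$ to prepend $(0)$. The only cosmetic difference is that you verify strictness by exhibiting a constant $x\in\mathfrak{p}_{i+1}\setminus\mathfrak{p}_i$, whereas the paper argues by contraction (if the $\mathfrak{P}$'s coincided, so would the $\mathfrak{p}$'s); these are the same observation phrased two ways.
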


\begin{proof}
Set $n=\dim(D),$ let $a$ be an element of $E$ and let $(0) = \mathfrak{p}_{0}\subset \mathfrak{p}_{1}\subset \dots\subset \mathfrak{p}_{n}$ be a chain of prime ideals of length $n$ in $D.$ We first note that the inclusion $\mathfrak{p}_{i}\subset \mathfrak{p}_{i+1}$ implies a strict inclusion $\mathfrak{P}_{\mathfrak{p}_{i},a} \subset \mathfrak{P}_{\mathfrak{p}_{i+1},a}$ in $\intR(E,D).$ Indeed, if not, the fact that the prime ideal $\mathfrak{P}_{\mathfrak{p}_{i},a}$ (resp., $\mathfrak{P}_{\mathfrak{p}_{i+1},a}$) is above $\mathfrak{p}_{i}$ (resp., $\mathfrak{p}_{i+1}$) implies the equality $\mathfrak{p}_{i}=\mathfrak{p}_{i+1},$ which is a contradiction. Consequently, $\mathfrak{P}_{\mathfrak{p}_{0},a} \subset \mathfrak{P}_{\mathfrak{p}_{1},a} \subset \mathfrak{P}_{\mathfrak{p}_{2},a} \subset \dots \subset \mathfrak{P}_{\mathfrak{p}_{n},a}$ is a chain of prime ideals of length $n$ in $\intR(E,D),$ and thus $\dim(\intR(E,D))\geqslant\dim(D).$ For the moreover statement, assume that $E$ is a fractional subset of $D$. By definition, there is a nonzero element  $d$ of $D$ such that $dE\subseteq D$, and then $\mathfrak{P}_{\mathfrak{p}_{0},a}\neq (0)$ because the polynomial $dX-da$  lies in $\mathfrak{P}_{\mathfrak{p}_{0},a}.$ Hence, $(0)\subset \mathfrak{P}_{\mathfrak{p}_{0},a} \subset \mathfrak{P}_{\mathfrak{p}_{1},a} \subset \mathfrak{P}_{\mathfrak{p}_{2},a} \subset \dots \subset \mathfrak{P}_{\mathfrak{p}_{n},a}$ forms a chain of prime ideals of length $n+1$ in $\intR(E,D)$. Therefore, $\dim(\intR(E,D))\geqslant 1 + \dim(D),$ as desired.
\end{proof}

\begin{remark}
From the previous proposition, we deduce that if the dimension of $D$ is infinite, then so is the dimension of $\intR(E,D)$. However, we do not know whether the converse is true or not.
\end{remark}


It is well-known that $\dim_v(T)\leqslant\dim_v(D),$ for any overring $T$ of an integral domain $D$. So, from this fact, we deduce the following: 
\begin{proposition}
For any subset $E$ of $K$, the following inequality holds $$\dim_v(\mathrm{Int}(E,D))\geqslant \dim_v(\intR(E,D)).$$
\end{proposition}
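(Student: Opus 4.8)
The plan is to recognise $\intR(E,D)$ as an overring of $\int(E,D)$ and then to invoke the monotonicity of the valuative dimension recalled just above, namely $\dim_v(T)\leqslant\dim_v(R)$ for every overring $T$ of an integral domain $R$, applied with $R=\int(E,D)$ and $T=\intR(E,D)$. The containments $\int(E,D)\subseteq\intR(E,D)\subseteq K(X)$ are already recorded in the introduction, so the one genuine point to settle is that $\intR(E,D)$ and $\int(E,D)$ have the \emph{same} quotient field; once this is known, the inequality $\dim_v(\int(E,D))\geqslant\dim_v(\intR(E,D))$ is immediate.

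To identify $\mathrm{Frac}(\int(E,D))$, I would show that it equals $K(X)$. It contains $K$ because $D\subseteq\int(E,D)$. It also contains $X$: fixing $a\in E$ and a nonzero $d\in D$ with $dE\subseteq D$ (that is, using that $E$ is a fractional subset of $D$), the polynomial $dX-da$ lies in $\int(E,D)$, since at any $e\in E$ its value is $d(e-a)=de-da\in D$, while $da\in D\subseteq\int(E,D)$; hence $dX=(dX-da)+da\in\int(E,D)$ and so $X=(dX)/d\in\mathrm{Frac}(\int(E,D))$. Therefore $K[X]\subseteq\mathrm{Frac}(\int(E,D))$, whence $\mathrm{Frac}(\int(E,D))=K(X)$; since $\int(E,D)\subseteq\intR(E,D)\subseteq K(X)$ this also forces $\mathrm{Frac}(\intR(E,D))=K(X)$, so $\intR(E,D)$ is indeed an overring of $\int(E,D)$ and the quoted fact applies. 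Equivalently, one may verify directly that each $\varphi\in\intR(E,D)$ is a quotient of two elements of $\int(E,D)$: write $\varphi=f/g$ with $f,g\in K[X]$ coprime (so $g$ has no zero on $E$), clear the coefficient denominators so that $cf,cg\in D[X]$ for some nonzero $c\in D$, and then multiply numerator and denominator by a sufficiently high power $d^{M}$ of $d$; the resulting polynomials lie in $D[X]$, take values in $D$ along $E$ (for the numerator one may simply note its value at $e$ equals $\varphi(e)$ times that of the denominator), hence lie in $\int(E,D)$, and their quotient is $\varphi$.

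The crux — and the only place where something must actually be proved — is precisely this coincidence of quotient fields, i.e.\ that $\intR(E,D)$ really is an overring of $\int(E,D)$. This is where the way $E$ sits inside $K$ (its being a fractional subset of $D$) is used, so that the denominators produced by evaluating polynomials on $E$ remain under control; once that is secured, the statement follows in one line from the quoted inequality.
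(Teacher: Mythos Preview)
Your approach is exactly the paper's: recognise $\intR(E,D)$ as an overring of $\int(E,D)$ and apply the monotonicity $\dim_v(T)\leqslant\dim_v(R)$ for overrings. The paper asserts this in one line without checking the overring hypothesis; you go further and verify that both rings have quotient field $K(X)$, but in doing so you introduce an assumption absent from the statement, namely that $E$ is a \emph{fractional} subset of $D$. So your argument, while following the intended route, does not cover the proposition as written (``for any subset $E$ of $K$'').

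That extra hypothesis is in fact not removable. Take $D=\mathbb{Z}$ and $E=\{1/p:\ p\ \text{prime}\}\subset\mathbb{Q}$. If $f\in\mathbb{Q}[X]$ satisfies $f(1/p)\in\mathbb{Z}$ for every prime $p$, then $f(1/p)\to f(0)$ forces these integer values to be eventually constant, so $f-f(0)$ has infinitely many zeros and $f$ is constant; hence $\int(E,\mathbb{Z})=\mathbb{Z}$ and $\dim_v(\int(E,\mathbb{Z}))=1$. On the other hand $1/X\in\intR(E,\mathbb{Z})$, so $\intR(E,\mathbb{Z})\not\subseteq\mathbb{Q}=\mathrm{Frac}(\int(E,\mathbb{Z}))$ and the overring claim fails. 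Worse, the substitution $Y=1/X$ identifies $\intR(E,\mathbb{Z})$ with $\intR(\mathbb{P},\mathbb{Z})$ for the set of primes $\mathbb{P}\subset\mathbb{Z}$, and since $\mathbb{P}$ \emph{is} a fractional subset of $\mathbb{Z}$, the formula $\dim_v(\intR(F,D))=1+\dim_v(D)$ (proved later in the paper for fractional $F$) gives $\dim_v(\intR(E,\mathbb{Z}))=2>1$, so the stated inequality itself fails for this $E$. Thus your proof is correct exactly on the domain you isolated, and your instinct to flag the fractionality hypothesis was well founded; it is the proposition's stated generality, not your reasoning, that is at fault.
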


In what follows, we provide the analogue of Proposition \ref{kd} for the valuative dimension.

\begin{proposition}\label{valudim}
We have $\dim_v(\intR(E,D))\geqslant \dim_v(D)$, and if moreover $E$ is a fractional subset of $D$ then $\dim_v(\intR(E,D))= 1 + \dim_v(D)$.
\end{proposition}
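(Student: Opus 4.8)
The plan is to derive both assertions from a single construction: starting from an arbitrary valuation overring of $D$, build — as a composite of valuation rings — a valuation overring of $\intR(E,D)$ of controlled dimension. Fix $a\in E$. Since $\varphi(a)\in D\subseteq K$ for every $\varphi\in\intR(E,D)$, each such $\varphi$ is regular at $a$, so $\intR(E,D)$ is contained in the discrete valuation ring $W_0:=K[X]_{(X-a)}$ of $K(X)$, whose residue homomorphism is precisely evaluation $\varphi\mapsto\varphi(a)$. Given any valuation overring $V$ of $D$ (so $D\subseteq V\subseteq K$), let $W:=\{\varphi\in W_0;\ \varphi(a)\in V\}$ be the pullback of $V$ along this residue map. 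By the classical theory of composites of valuation rings, $W$ is a valuation ring of $K(X)$ with $\dim(W)=1+\dim(V)$ (with the convention $W=W_0$, $\dim W=1$, when $V=K$), and one checks at once that $W\cap K=V$. Moreover $\intR(E,D)\subseteq W$, since $\varphi(a)\in D\subseteq V$ for every $\varphi\in\intR(E,D)$.

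For the first inequality, let $\mathcal F$ denote the quotient field of $\intR(E,D)$; as $D\subseteq\intR(E,D)$ we have $K\subseteq\mathcal F\subseteq K(X)$. Then $W\cap\mathcal F$ is a valuation overring of $\intR(E,D)$ whose valuation restricts on the subfield $K$ to the valuation of $W\cap K=V$; since restricting a valuation to a subfield cannot increase its rank, $\dim(W\cap\mathcal F)\geqslant\dim(V)$. Passing to the supremum over all valuation overrings $V$ of $D$ yields $\dim_v(\intR(E,D))\geqslant\dim_v(D)$. (Equivalently, one may observe that evaluation at $a$ exhibits $D$ as a homomorphic image of $\intR(E,D)$ and invoke the monotonicity of the valuative dimension under passage to quotient domains.)

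Now assume $E$ is a fractional subset of $D$, say $dE\subseteq D$ with $0\neq d\in D$. Then $d$ and $dX-da$ both lie in $\int(E,D)$, so $X\in\mathcal F$ and hence $\mathcal F=K(X)$; in particular $W$ itself is a valuation overring of $\intR(E,D)$, so $\dim_v(\intR(E,D))\geqslant\dim(W)=1+\dim(V)$ for every valuation overring $V$ of $D$, whence $\dim_v(\intR(E,D))\geqslant1+\dim_v(D)$. For the reverse inequality, note that $D[dX]\subseteq\int(E,D)\subseteq\intR(E,D)$ and that all three rings have quotient field $K(X)$; hence $\intR(E,D)$ is an overring of the polynomial ring $D[dX]\cong D[X]$, so by the facts recalled in the introduction $\dim_v(\intR(E,D))\leqslant\dim_v(D[X])=1+\dim_v(D)$. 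Combining the two inequalities gives $\dim_v(\intR(E,D))=1+\dim_v(D)$.

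The point requiring the most care is the non-fractional case of the first inequality, where $\mathcal F$ may be a proper subfield of $K(X)$: one must ensure that replacing $W$ by $W\cap\mathcal F$ does not spoil the rank estimate, which is exactly what the identity $W\cap K=V$ guarantees. The only external inputs are the classical formula $\dim(W)=1+\dim(V)$ for composite valuation rings and the behaviour of rank under restriction of valuations to subfields; all the rest reduces to the elementary properties of $\intR(E,D)$ recorded above and to the facts already recalled in the introduction.
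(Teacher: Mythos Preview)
Your proof is correct, but it proceeds quite differently from the paper's. The paper never builds valuation overrings of $\intR(E,D)$ explicitly; instead it bootstraps from Proposition~\ref{kd} (applied to a valuation overring $V$ of $D$) together with the inclusion $\intR(E,D)\subseteq\intR(E,V)$ and monotonicity of $\dim_v$ under overrings, and for the fractional bounds it quotes the known formulas $\dim_v(\int(E,V))=1+\dim(V)$ and $\dim_v(\int(E,D))=1+\dim_v(D)$ from \cite{C97}. Your argument replaces all of this by a single valuation-theoretic construction: the composite $W$ of the DVR $K[X]_{(X-a)}$ with $V$ gives, for each valuation overring $V$ of $D$, a valuation overring of $\intR(E,D)$ of dimension $1+\dim(V)$ (or, in the non-fractional case, its trace on the fraction field $\mathcal F$, whose rank is controlled via $W\cap K=V$). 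For the upper bound you bypass the citation to \cite[Exercise~V.5]{C97} by observing directly that $D[dX]\cong D[X]$ sits inside $\intR(E,D)$ with the same quotient field. The trade-off is that your route is more self-contained---it needs only the composite-valuation formula and the behaviour of rank under restriction to subfields---whereas the paper's route is shorter because it recycles Proposition~\ref{kd} and the $\int(E,D)$ results already available in \cite{C97}. (Incidentally, the primes $\mathfrak{P}_{\mathfrak{p},a}$ used in Proposition~\ref{kd} are precisely the contractions to $\intR(E,D)$ of the primes of your $W$, so the two arguments are secretly the same chain viewed from different angles.) Your parenthetical alternative via ``monotonicity of $\dim_v$ under quotient domains'' is also valid but less elementary than your main line, so it is just as well that you do not rely on it.
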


\begin{proof}
Let $V$ be a valuation overring of $D$ such that $\dim_v(D)=\dim(V)$. It follows from Proposition \ref{kd} that $\dim(V)\leqslant \dim(\intR(E,V))$, and since $\intR(E,D)\subseteq\intR(E,V)$, this implies 
  $$\dim_{v}(D)=\dim(V)\leqslant \dim(\intR(E,V))\leqslant \dim_{v}(\intR(E,V))\leqslant \dim_{v}(\intR(E,D)).$$
For the moreover statement, assume that $E$ is a fractional subset of $D$. We claim first that $\dim_{v}(\intR(E,V))=1+\dim(V)$. As a matter of fact, since $E$ is also a fractional subset of $V$ and $\mathrm{Int}(E,V)\subseteq\intR(E,V)$, we have   $$1+\dim(V)\leqslant\dim(\intR(E,V))\leqslant\dim_{v}(\intR(E,V))\leqslant\dim_v(\mathrm{Int}(E,V))=1+\dim(V)$$ (the last equality follows from \cite[Proposition V.1.8]{C97}). So, using the previous claim and from the inclusion  $\intR(E,D)\subseteq\intR(E,V)$, we deduce that $$1+\dim_v(D)=1+\dim(V)=\dim_{v}(\intR(E,V))\leqslant\dim_{v}(\intR(E,D)).$$
For the other inequality, we have $\dim_v(\intR(E,D)) \leqslant \dim_v(\int(E,D))$ because  $\int(E,D)\subseteq \intR(E,D)$. So, by \cite[Exercise V.5]{C97}, $\dim_v(\int(E,D)) = 1+\dim_v(D)$, and hence  $\dim_v(\intR(E,D)) \leqslant  1+\dim_v(D)$. Therefore,  $\dim_v(\intR(E,D))=1+\dim_v(D)$.
\end{proof}

From this last result, we derive the following two corollaries.
\begin{corollary}\label{ega}
For any fractional subset $E$ of a Jaffard domain  $D$, we have $$\dim(\intR(E,D))=1+\dim(D).$$
\end{corollary}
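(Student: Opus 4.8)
The plan is simply to squeeze $\dim(\intR(E,D))$ between $1+\dim(D)$ and $1+\dim_v(D)$ and then invoke the Jaffard hypothesis to make the two ends coincide. All the genuine work has already been done in Propositions \ref{kd} and \ref{valudim}; what remains is to assemble them.

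First, since $E$ is assumed to be a fractional subset of $D$, the ``moreover'' part of Proposition \ref{kd} yields the lower bound
$$\dim(\intR(E,D))\geqslant 1+\dim(D).$$
For the reverse inequality I would use the elementary fact that the Krull dimension of any commutative ring is bounded above by its valuative dimension, so that $\dim(\intR(E,D))\leqslant\dim_v(\intR(E,D))$; and then apply the ``moreover'' part of Proposition \ref{valudim}, which (again using that $E$ is a fractional subset of $D$) gives $\dim_v(\intR(E,D))=1+\dim_v(D)$. Hence
$$1+\dim(D)\leqslant\dim(\intR(E,D))\leqslant\dim_v(\intR(E,D))=1+\dim_v(D).$$

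Finally, the hypothesis that $D$ is a Jaffard domain means precisely that $\dim(D)=\dim_v(D)$ (with this common value finite), so the leftmost and rightmost terms of the displayed chain are equal, forcing $\dim(\intR(E,D))=1+\dim(D)$, as claimed.

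I do not expect any real obstacle: the argument is a two-line consequence of the preceding propositions together with the general inequality $\dim\leqslant\dim_v$. The only point worth keeping in mind is that the definition of a Jaffard domain incorporates the finiteness of $\dim(D)=\dim_v(D)$, which is what makes the squeeze meaningful rather than the vacuous identity $\infty=\infty$; one could also phrase the upper bound without $\dim_v(\intR(E,D))$ by going directly through $\int(E,D)\subseteq\intR(E,D)$ and \cite[Exercise V.5]{C97}, but this merely re-traces part of the proof of Proposition \ref{valudim}.
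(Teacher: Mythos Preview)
Your argument is correct and follows essentially the same route as the paper: bound $\dim(\intR(E,D))$ below by $1+\dim(D)$ via Proposition~\ref{kd}, above by $\dim_v(\intR(E,D))=1+\dim_v(D)$ via Proposition~\ref{valudim}, and then use the Jaffard hypothesis $\dim(D)=\dim_v(D)$ to close the gap. The only difference is presentational order.
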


\begin{proof}
Let $D$ be a Jaffard and $E$ is a fractional subset of $D$. Since $D$ is Jaffard and $\dim(\intR(E,D))\leqslant \dim_v(\intR(E,D))$, it follows from Proposition \ref{valudim} that $\dim(\intR(E,D))\leqslant 1 + \dim_v(D)=1 + \dim(D)$. Moreover, the other inequality is verified in Proposition \ref{kd}. Thus, $\dim(\intR(E,D))=1 + \dim(D)$, as wanted.  
\end{proof}

Notice that, according to \cite[Proposition 1.2\rm(a)]{ABDFK88}, if $D$ is a Jaffard domain, then $D[X]$ is also a Jaffard domain. However, it follows from \cite[Example 3.6]{FK90} that  $D[X]$ can be a Jaffard domain whereas $D$ is not. Lastly, as pointed out in \cite[Remark 1.3(c)]{ABDFK88}, $D$ is a Jaffard domain if and only if $D[X]$ is a Jaffard domain with Krull dimension is equal  to $1+\dim(D).$

\begin{corollary}\label{CoDX}
If $D[X]$ is Jaffard and $E$ is a fractional subset of  $D$, then  $$\dim(\intR(E,D))\leqslant\dim(D[X]).$$
\end{corollary}

\begin{proof}
Assume that $D[X]$ is Jaffard and $E$ a fractional subset of $D$. We have $\dim(D[X]) = \dim_v(D[X])$, and so by Proposition \ref{valudim},  $$\dim_v(\intR(E,D)) = 1 + \dim_v(D) = \dim_v(D[X]) = \dim(D[X]),$$where the second equality follows from \cite[Théorème 2 du Chapitre IV]{J60}. Therefore, $\dim(\intR(E,D)) \leqslant \dim(D[X]).$
\end{proof}   

With a slight modification of the proofs of Proposition \ref{valudim} and Corollary \ref{CoDX}, we easily obtain the following:
\begin{proposition}\label{ProDimV}
Let $E$ be a subset of $D$ and let $B$ be a ring between $D[X]$ and $\intR(E,D)$. We have the following statements:
\begin{enumerate}[$(1)$]  
\item  $\dim_v(B)=\dim_v(D[X])$. 
\item If $D[X]$ is Jaffard, then $\dim(B)\leqslant\dim(D[X])$.
\end{enumerate}
\end{proposition}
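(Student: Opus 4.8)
The plan is to derive both parts directly from Proposition \ref{valudim} together with the monotonicity of the valuative dimension under passage to overrings, avoiding any new chain-of-primes argument.

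\textbf{Set-up and proof of (1).} Since $E$ is a subset of $D$, it is in particular a fractional subset of $D$ (take $d=1$), so by Fact 2 the ring $\intR(E,D)$ is an overring of $D[X]$; consequently $D[X]$, $B$ and $\intR(E,D)$ all have quotient field $K(X)$, and we have a chain of overrings $D[X]\subseteq B\subseteq\intR(E,D)$. Proposition \ref{valudim} gives $\dim_v(\intR(E,D))=1+\dim_v(D)$, while $\dim_v(D[X])=1+\dim_v(D)$ is the classical formula recalled in the introduction. Applying the inequality $\dim_v(T)\leqslant\dim_v(R)$ for an overring $T$ of $R$ to the two inclusions $D[X]\subseteq B$ and $B\subseteq\intR(E,D)$ yields
$$1+\dim_v(D)=\dim_v(\intR(E,D))\leqslant\dim_v(B)\leqslant\dim_v(D[X])=1+\dim_v(D),$$
so $\dim_v(B)$ is squeezed to the common value $1+\dim_v(D)=\dim_v(D[X])$, which proves $(1)$.

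\textbf{Proof of (2).} Assume $D[X]$ is Jaffard, so $\dim(D[X])=\dim_v(D[X])$. Then, using part $(1)$,
$$\dim(B)\leqslant\dim_v(B)=\dim_v(D[X])=\dim(D[X]),$$
which is exactly the argument of Corollary \ref{CoDX} with $B$ in place of $\intR(E,D)$.

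\textbf{Where the work lies.} There is no genuine obstacle: the only point requiring care is the bookkeeping that $B$ really sits inside the interval $[D[X],\intR(E,D)]$ as a chain of overrings sharing the quotient field $K(X)$ — this is where the hypothesis $E\subseteq D$ enters, via Fact 2. Once that is in place, the statement is a formal squeeze, since Proposition \ref{valudim} and the standard formula $\dim_v(D[X])=1+\dim_v(D)$ pin down the valuative dimension at both endpoints of the interval and hence force it to be constant throughout.
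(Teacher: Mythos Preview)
Your proof is correct and is essentially the approach the paper has in mind: the paper gives no explicit argument, merely noting that a ``slight modification of the proofs of Proposition \ref{valudim} and Corollary \ref{CoDX}'' suffices, and your squeeze using $D[X]\subseteq B\subseteq\intR(E,D)$ together with monotonicity of $\dim_v$ under overrings is exactly that modification.
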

If $\mathfrak{p}$ is a prime ideal of $D$, we set $\intR(D)_\mathfrak{p}:=\intR(D)_{(D\setminus\mathfrak{p})}.$
In the next result, we show that $\intR(D)_\mathfrak{p}$ and $\intR(D_\mathfrak{p})$ have the same valuative dimension for all prime ideals $\mathfrak{p}$ of $D$.
\begin{lemma}\label{LemLocalDim}
We have $$\dim(\intR(D))=\sup\left\{\dim(\intR(D)_\mathfrak{m});\,\mathfrak{m}\in\mathrm{Max}(D)\right\}.$$ 
\end{lemma}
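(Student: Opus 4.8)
The plan is to reduce the statement to the standard principle that the Krull dimension of a commutative ring is the supremum of the Krull dimensions of its localizations, but taking localizations only at multiplicative subsets of $D$. Throughout I use two elementary facts: first, that $D\subseteq\intR(D)$, so that contracting a prime ideal of $\intR(D)$ along this inclusion produces a prime ideal of $D$; and second, that by definition $\intR(D)_\mathfrak{m}=S^{-1}\intR(D)$ where $S=D\setminus\mathfrak{m}$, i.e. $\intR(D)_\mathfrak{m}$ is a ring of fractions of $\intR(D)$.

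\textbf{Step 1 (the inequality $\geqslant$).} For each $\mathfrak{m}\in\mathrm{Max}(D)$, the ring $\intR(D)_\mathfrak{m}$ is a ring of fractions of $\intR(D)$; since primes of a ring of fractions $S^{-1}R$ correspond order-isomorphically to the primes of $R$ disjoint from $S$, passing to a ring of fractions never increases the Krull dimension. Hence $\dim(\intR(D)_\mathfrak{m})\leqslant\dim(\intR(D))$ for all $\mathfrak{m}$, and therefore $\sup\{\dim(\intR(D)_\mathfrak{m});\,\mathfrak{m}\in\mathrm{Max}(D)\}\leqslant\dim(\intR(D))$.

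\textbf{Step 2 (the inequality $\leqslant$).} Start from an arbitrary finite chain of prime ideals $\mathfrak{P}_0\subsetneq\mathfrak{P}_1\subsetneq\cdots\subsetneq\mathfrak{P}_n$ of $\intR(D)$, and set $\mathfrak{p}_i:=\mathfrak{P}_i\cap D$. This yields a chain $\mathfrak{p}_0\subseteq\mathfrak{p}_1\subseteq\cdots\subseteq\mathfrak{p}_n$ of prime ideals of $D$ (not necessarily with strict inclusions). Pick a maximal ideal $\mathfrak{m}$ of $D$ with $\mathfrak{p}_n\subseteq\mathfrak{m}$; then $\mathfrak{p}_i\subseteq\mathfrak{m}$ for every $i$, so $\mathfrak{P}_i\cap(D\setminus\mathfrak{m})=\mathfrak{p}_i\cap(D\setminus\mathfrak{m})=\emptyset$, which means each $\mathfrak{P}_i$ survives in $\intR(D)_\mathfrak{m}$. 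Consequently $\mathfrak{P}_0\intR(D)_\mathfrak{m}\subsetneq\cdots\subsetneq\mathfrak{P}_n\intR(D)_\mathfrak{m}$ is a chain of prime ideals of length $n$ in $\intR(D)_\mathfrak{m}$, so $n\leqslant\dim(\intR(D)_\mathfrak{m})\leqslant\sup\{\dim(\intR(D)_{\mathfrak{m}'});\,\mathfrak{m}'\in\mathrm{Max}(D)\}$. Taking the supremum over all finite chains of primes of $\intR(D)$ gives $\dim(\intR(D))\leqslant\sup\{\dim(\intR(D)_\mathfrak{m});\,\mathfrak{m}\in\mathrm{Max}(D)\}$, and combining with Step 1 we obtain equality.

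This argument is short and I do not expect a genuine obstacle; the only point that needs a moment of care is in Step 2, namely that the finitely many contracted primes $\mathfrak{p}_0,\dots,\mathfrak{p}_n$ all lie below a single maximal ideal of $D$ — which is immediate since $\mathfrak{p}_n$ dominates all of them, so any maximal ideal containing $\mathfrak{p}_n$ works. (Implicitly one also uses that $\intR(D)_\mathfrak{m}$ is nonzero, which is clear since $D\setminus\mathfrak{m}$ does not contain $0$.)
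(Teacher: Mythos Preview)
Your proof is correct. The paper does not give its own argument here but simply cites \cite[Lemma 1.2]{Tam21}; your two-step argument is precisely the standard elementary proof underlying that reference, using only that $D\subseteq\intR(D)$ so that any finite chain of primes in $\intR(D)$ contracts into $D$ and hence is disjoint from $D\setminus\mathfrak{m}$ for a suitable $\mathfrak{m}\in\mathrm{Max}(D)$.
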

\begin{proof}
This is a particular case of \cite[Lemma 1.2]{Tam21}.
\end{proof}
\begin{proposition}\label{v_dimLo}
For each prime ideal $\mathfrak{p}$ of $D,$ we have $$\dim_v(\intR(D)_\mathfrak{p})=\dim_v(\intR(D_\mathfrak{p})) = 1 +\dim_v(D_\mathfrak{p}).$$
\end{proposition}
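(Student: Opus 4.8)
The plan is to handle the two equalities separately. The second one, $\dim_v(\intR(D_\mathfrak{p})) = 1 + \dim_v(D_\mathfrak{p})$, is immediate: $D_\mathfrak{p}$ is trivially a fractional subset of the domain $D_\mathfrak{p}$, so the ``moreover'' part of Proposition \ref{valudim}, applied with base domain $D_\mathfrak{p}$ and $E = D_\mathfrak{p}$, gives it directly. Everything therefore reduces to proving $\dim_v(\intR(D)_\mathfrak{p}) = 1 + \dim_v(D_\mathfrak{p})$.

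For this, rather than trying to identify $\intR(D)_\mathfrak{p}$ with $\intR(D_\mathfrak{p})$ (which need not hold), I would trap $\intR(D)_\mathfrak{p}$ between two rings of known valuative dimension by establishing the containments
$$D_\mathfrak{p}[X] \ \subseteq\ \intR(D)_\mathfrak{p} \ \subseteq\ \intR(D,D_\mathfrak{p})$$
inside $K(X)$. The first is obtained by localizing $D[X] \subseteq \intR(D)$ at the multiplicative set $D\setminus\mathfrak{p}$ and using $(D\setminus\mathfrak{p})^{-1}(D[X]) = D_\mathfrak{p}[X]$ together with $(D\setminus\mathfrak{p})^{-1}\intR(D) = \intR(D)_\mathfrak{p}$. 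For the second, write a generic element of $\intR(D)_\mathfrak{p}$ as $\eta/s$ with $\eta \in \intR(D)$ and $s \in D\setminus\mathfrak{p}$; then for every $a \in D$ we get $(\eta/s)(a) = \eta(a)/s$, which lies in $D_\mathfrak{p}$ since $\eta(a) \in D \subseteq D_\mathfrak{p}$ and $s$ is a unit in $D_\mathfrak{p}$. Hence $\eta/s \in \intR(D,D_\mathfrak{p})$.

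All three rings have quotient field $K(X)$, so each is an overring of the previous one and $\dim_v$ decreases along the chain, giving
$$\dim_v(\intR(D,D_\mathfrak{p})) \ \leqslant\ \dim_v(\intR(D)_\mathfrak{p}) \ \leqslant\ \dim_v(D_\mathfrak{p}[X]).$$
The right-hand side equals $1 + \dim_v(D_\mathfrak{p})$ by the polynomial formula recalled in the introduction. As for the left-hand side, $D$ is a fractional subset of $D_\mathfrak{p}$ (take $d = 1$, since $D \subseteq D_\mathfrak{p}$), so the ``moreover'' part of Proposition \ref{valudim}, now applied with base domain $D_\mathfrak{p}$ and $E = D$, gives $\dim_v(\intR(D,D_\mathfrak{p})) = 1 + \dim_v(D_\mathfrak{p})$ as well. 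Squeezing yields $\dim_v(\intR(D)_\mathfrak{p}) = 1 + \dim_v(D_\mathfrak{p})$, which together with the first paragraph completes the proof.

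The argument is formal once the two containments are in hand, and no finiteness hypothesis is needed (if $\dim_v(D_\mathfrak{p})$ is infinite the same inequalities still force $\dim_v(\intR(D)_\mathfrak{p})$ to be infinite). The only point demanding a little care is the second containment: one should check that every element of $\intR(D)_{D\setminus\mathfrak{p}}$ really can be written with a single denominator in $D\setminus\mathfrak{p}$, and that evaluating $\eta\cdot s^{-1}$ at a point of $D$ is compatible with multiplication by the constant $s^{-1}\in K$ — both harmless.
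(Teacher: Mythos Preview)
Your proof is correct and follows essentially the same route as the paper: both arguments sandwich $\intR(D)_\mathfrak{p}$ between $D_\mathfrak{p}[X]$ and $\intR(D,D_\mathfrak{p})$, compute the valuative dimension of each end via Proposition~\ref{valudim} (respectively the polynomial formula), and squeeze. Your write-up is slightly more explicit about why the containments hold and why the quotient fields agree, but the strategy is identical.
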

\begin{proof}
Let $\mathfrak{p}$ be a prime ideal of $D$. The second equality follows directly from Proposition \ref{valudim}. To prove that $\dim_v(\intR(D)_\mathfrak{p})= 1 +\dim_v(D_\mathfrak{p}),$ we need first to check the inclusion $\intR(D)_\mathfrak{p}\subseteq\intR(D,D_\mathfrak{p})$. To do this, let $\varphi \in \intR(D)_\mathfrak{p}$. Then there exists an element $x$ of $D\backslash\mathfrak{p}$ such that $x\varphi \in \intR(D)$, and hence $x\varphi\in K(X)$ and $x\varphi(d)\in D$, for all $d\in D$. Thus, $\varphi\in K(X)$ and $\varphi(d)\in D_\mathfrak{p}$ for all $d\in D$, that is, $\varphi \in \intR(D,D_\mathfrak{p}).$ Thus, we have $D_\mathfrak{p}[X]\subseteq\intR(D)_\mathfrak{p}\subseteq\intR(D,D_\mathfrak{p}),$ and therefore $$\dim_v(\intR(D,D_\mathfrak{p}))\leqslant \dim_v(\intR(D)_\mathfrak{p}))\leqslant\dim_v(D_\mathfrak{p}[X])=1+\dim_v(D_\mathfrak{p}).$$ 
Furthermore, by Proposition \ref{valudim}, $\dim_v(\intR(D,D_\mathfrak{p}))=1+\dim_v(D_\mathfrak{p}),$ and so we deduce the equality $\dim_v(\intR(D)_\mathfrak{p})=1+\dim_v(D_\mathfrak{p}).$ 
\end{proof}
Proposition \ref{v_dimLo} offers us an analogous result to Lemma \ref{LemLocalDim} concerning the valuative dimension.
\begin{corollary}
We have 
\begin{align*}
\dim_v(\intR(D))& =\sup\left\{\dim_v(\intR(D)_{\mathfrak{m}});\,\mathfrak{m}\in\mathrm{Max}(D)\right\}\\
&=\sup\left\{\dim_v(\intR(D_{\mathfrak{m}}));\,\mathfrak{m}\in\mathrm{Max}(D)\right\}
\end{align*}
\end{corollary}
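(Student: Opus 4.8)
The plan is to reduce the corollary to ingredients that are already available, namely Propositions \ref{valudim} and \ref{v_dimLo} together with the classical localization formula for the valuative dimension, $\dim_v(D)=\sup\{\dim_v(D_\mathfrak{m});\,\mathfrak{m}\in\mathrm{Max}(D)\}$. None of the steps should be long; the corollary is essentially a bookkeeping consequence of Proposition \ref{v_dimLo}.

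First I would dispatch the second equality. For every $\mathfrak{m}\in\mathrm{Max}(D)$, Proposition \ref{v_dimLo} gives $\dim_v(\intR(D)_\mathfrak{m})=\dim_v(\intR(D_\mathfrak{m}))$, so the two suprema over $\mathrm{Max}(D)$ agree term by term and hence coincide. For the first equality, note that $D$ is a fractional subset of itself (take $d=1$), so Proposition \ref{valudim} yields $\dim_v(\intR(D))=1+\dim_v(D)$, while Proposition \ref{v_dimLo} gives $\dim_v(\intR(D)_\mathfrak{m})=1+\dim_v(D_\mathfrak{m})$ for each $\mathfrak{m}$. Substituting these identities, the claimed equality becomes $1+\dim_v(D)=\sup\{1+\dim_v(D_\mathfrak{m});\,\mathfrak{m}\in\mathrm{Max}(D)\}$, and, pulling the constant $1$ out of the supremum (which is legitimate in $\mathbb{N}\cup\{+\infty\}$), this is exactly $\dim_v(D)=\sup\{\dim_v(D_\mathfrak{m});\,\mathfrak{m}\in\mathrm{Max}(D)\}$.

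It remains to justify this last formula. I would argue with valuation overrings: if $V$ is a valuation overring of $D$ with maximal ideal $\mathfrak{m}_V$ and center $\mathfrak{p}=\mathfrak{m}_V\cap D$, choose a maximal ideal $\mathfrak{m}$ of $D$ with $\mathfrak{p}\subseteq\mathfrak{m}$. Every element of $D\setminus\mathfrak{m}$ then lies outside $\mathfrak{m}_V$ and is therefore a unit of $V$, so $D_\mathfrak{m}\subseteq V$; that is, $V$ is a valuation overring of $D_\mathfrak{m}$. Conversely, any valuation overring of $D_\mathfrak{m}$ is a valuation overring of $D$ since $D\subseteq D_\mathfrak{m}$. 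Hence the family of valuation overrings of $D$ is the union, over $\mathfrak{m}\in\mathrm{Max}(D)$, of the families of valuation overrings of the $D_\mathfrak{m}$, and taking the supremum of their Krull dimensions gives the desired equality. (This is classical; one may alternatively invoke \cite[Chapitre IV]{J60}.)

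I do not expect a genuine obstacle: the only non-formal input is the localization formula for $\dim_v$, and even that has the short proof sketched above. The single point requiring minor care is that all the suprema are taken in $\mathbb{N}\cup\{+\infty\}$, so the manipulation of pulling the summand $1$ through the supremum remains valid when $\dim_v(D)$ is infinite.
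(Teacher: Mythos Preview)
Your proposal is correct and follows essentially the same approach as the paper: both reduce the claim to Proposition~\ref{valudim}, Proposition~\ref{v_dimLo}, and the localization formula $\dim_v(D)=\sup\{\dim_v(D_\mathfrak{m});\,\mathfrak{m}\in\mathrm{Max}(D)\}$, with the latter justified by the same valuation-overring argument you sketch. The only cosmetic difference is ordering---the paper runs the chain of equalities in one pass rather than treating the two equalities separately---but the content is identical.
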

\begin{proof}
We first notice that for any integral domain $R$, we have $$\dim_v(R)=\sup\left\{\dim_v(R_\mathfrak{m});\,\mathfrak{m}\in\mathrm{Max}(R)\right\}$$
(this equality holds because any valuation overring $V$ of $R$  always contains $R_\mathfrak{m},$ where $\mathfrak{m}$ is a maximal ideal containing the center of $V$ on $R$).\\
From Proposition \ref{valudim}, we have $\dim_v(\intR(D))=1+\dim_v(D),$ and then 
\begin{align*}
\dim_v(\intR(D))& =1+\sup\left\{\dim_v(D_\mathfrak{m});\,\mathfrak{m}\in\mathrm{Max}(D)\right\}\\
&=\sup\left\{1+\dim_v(D_\mathfrak{m});\,\mathfrak{m}\in\mathrm{Max}(D)\right\}\\
& =\sup\left\{\dim_v(\intR(D)_{\mathfrak{m}});\,\mathfrak{m}\in\mathrm{Max}(D)\right\}\\
&=\sup\left\{\dim_v(\intR(D_{\mathfrak{m}}));\,\mathfrak{m}\in\mathrm{Max}(D)\right\}
\end{align*}
(these last two equalities are due to Proposition \ref{v_dimLo}).
\end{proof}
\begin{corollary}
We have $$ 1+\dim(D)\leqslant\dim(\intR(D))\leqslant 1 +\dim_v(D).$$
\end{corollary}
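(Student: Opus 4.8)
The plan is to obtain both inequalities by specializing to the case $E=D$ the results already established for $\intR(E,D)$. The key preliminary observation is that $D$ is always a fractional subset of itself: taking the nonzero element $d=1\in D$ we have $dD=D\subseteq D$, so every hypothesis of the form ``$E$ is a fractional subset of $D$'' is automatically satisfied with $E=D$. Since $\intR(D)=\intR(D,D)$, both Proposition \ref{kd} and Proposition \ref{valudim} apply verbatim in this situation.

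For the lower bound, I would simply invoke the ``moreover'' part of Proposition \ref{kd} with $E=D$, which gives $\dim(\intR(D))=\dim(\intR(D,D))\geqslant 1+\dim(D)$.

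For the upper bound, I would combine two facts: first, the universal inequality $\dim(R)\leqslant\dim_v(R)$ valid for every integral domain $R$, applied to $R=\intR(D)$; and second, the ``moreover'' part of Proposition \ref{valudim}, again with $E=D$, which computes $\dim_v(\intR(D))=1+\dim_v(D)$. Chaining these yields $\dim(\intR(D))\leqslant\dim_v(\intR(D))=1+\dim_v(D)$, and putting the two bounds together gives the asserted chain $1+\dim(D)\leqslant\dim(\intR(D))\leqslant 1+\dim_v(D)$.

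I do not expect any real obstacle here: the statement is a purely formal consequence of Propositions \ref{kd} and \ref{valudim} once one notes that $D$ qualifies as a fractional subset of itself. (If one additionally assumes $D$ Jaffard, the two bounds collapse and one recovers the exact value $1+\dim(D)$ as in Corollary \ref{ega}, but the two-sided estimate above requires no such hypothesis.)
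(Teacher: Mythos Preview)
Your argument is correct, but it differs from the paper's proof of the upper bound. You apply Proposition~\ref{valudim} directly with $E=D$ to get $\dim_v(\intR(D))=1+\dim_v(D)$ and then use $\dim\leqslant\dim_v$. The paper instead passes through the localization results just established: it invokes Lemma~\ref{LemLocalDim} to write $\dim(\intR(D))=\sup_{\mathfrak{m}}\dim(\intR(D)_{\mathfrak{m}})$, then bounds each $\dim(\intR(D)_{\mathfrak{m}})\leqslant\dim_v(\intR(D)_{\mathfrak{m}})=1+\dim_v(D_{\mathfrak{m}})$ via Proposition~\ref{v_dimLo}, and finally takes the supremum using $\dim_v(D)=\sup_{\mathfrak{m}}\dim_v(D_{\mathfrak{m}})$. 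Your route is shorter and entirely self-contained once Proposition~\ref{valudim} is in hand; the paper's route is more roundabout but serves to illustrate the local machinery (Lemma~\ref{LemLocalDim} and Proposition~\ref{v_dimLo}) developed immediately before the corollary. Both reach the same conclusion, and your lower bound via Proposition~\ref{kd} matches the paper exactly.
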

\begin{proof}
The left inequality follows from  Proposition \ref{kd}. For the right inequality, we use the fact that $\dim_v(D)=\sup\left\{\dim_v(D_\mathfrak{m});\,\mathfrak{m}\in\mathrm{Max}(D)\right\}$, and we combine this with  Lemma \ref{LemLocalDim} and Proposition \ref{v_dimLo}.
\end{proof}
\begin{remark}
For any Jaffard domain $D$, the equality $\dim(\intR(D))=1 +\dim(D)$ follows directly from Corollary \ref{ega}. It is worth noting that the same equality is obviously derived from the previous corollary.
\end{remark}

Now, we present our first main result of this paper.
\begin{theorem}\label{dk}
Assume that $E$ is a fractional subset of $D$. Then, $D$ is Jaffard if and only if $\intR(E,D)$ is Jaffard and $\dim(\intR(E,D)) = 1 +\dim(D)$.
\end{theorem}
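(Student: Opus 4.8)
The plan is to deduce both implications directly from Proposition~\ref{valudim}, which computes $\dim_v(\intR(E,D)) = 1 + \dim_v(D)$ for a fractional subset $E$ of $D$, combined with the elementary inequality $\dim(R) \leqslant \dim_v(R)$, valid for every integral domain $R$. No new construction is needed; everything is assembled from results already in hand, so the write-up will be short.

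For the forward implication, I would assume $D$ is Jaffard, so that $\dim(D) = \dim_v(D)$ is finite. Corollary~\ref{ega} (which applies since $E$ is a fractional subset of the Jaffard domain $D$) gives $\dim(\intR(E,D)) = 1 + \dim(D)$, and Proposition~\ref{valudim} gives $\dim_v(\intR(E,D)) = 1 + \dim_v(D) = 1 + \dim(D)$. Hence $\dim(\intR(E,D)) = \dim_v(\intR(E,D)) = 1 + \dim(D)$, and this common value is finite, so $\intR(E,D)$ is Jaffard with the asserted Krull dimension.

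For the converse, I would assume $\intR(E,D)$ is Jaffard with $\dim(\intR(E,D)) = 1 + \dim(D)$. Being Jaffard means $\dim(\intR(E,D)) = \dim_v(\intR(E,D))$ and this value is finite; in particular $\dim(D)$ is finite. Applying the ``moreover'' part of Proposition~\ref{valudim} (again using that $E$ is a fractional subset of $D$), $\dim_v(\intR(E,D)) = 1 + \dim_v(D)$. Comparing the two expressions for $\dim_v(\intR(E,D))$ yields $1 + \dim(D) = 1 + \dim_v(D)$, whence $\dim(D) = \dim_v(D) < \infty$, i.e. $D$ is Jaffard.

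The argument is bookkeeping on top of Proposition~\ref{valudim}; the only point requiring a moment's attention is finiteness. In the forward direction it is automatic, since a Jaffard domain has finite Krull dimension by definition; in the converse it is guaranteed by the hypothesis that $\intR(E,D)$ is Jaffard together with the dimension equality. I do not anticipate any real obstacle here, because the substantive content, namely the equality $\dim_v(\intR(E,D)) = 1 + \dim_v(D)$, has already been proved.
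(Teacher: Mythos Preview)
Your proposal is correct and follows essentially the same approach as the paper: both directions are obtained by combining Corollary~\ref{ega} (for the forward implication) with Proposition~\ref{valudim}, exactly as you outline. Your explicit attention to finiteness is slightly more careful than the paper's write-up, but otherwise the arguments coincide.
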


\begin{proof} 
If $D$ is Jaffard, then it follows from Proposition \ref{valudim} and Corollary \ref{ega} that   $\dim_v(\intR(E,D)) = 1 + \dim_v(D) = 1 + \dim(D) = \dim(\intR(E,D)).$ Thus, $\intR(E,D)$ is Jaffard and $\dim(\intR(E,D))=1+ \dim(D)$. For the reverse implication, assume that $\intR(E,D)$ is Jaffard and $\dim(\intR(E,D)) = 1 +\dim(D)$. Then,  $\dim_v(\intR(E,D)) = \dim(\intR(E,D))= 1 +\dim(D)$, and hence   $1 + \dim_v(D) = 1 + \dim(D)$ (by Proposition \ref{valudim}). Therefore, $\dim(D)=\dim_v(D),$ that is, $D$ is Jaffard.
\end{proof}

As illustrative examples, consider the following:
\begin{example}\label{Exa1}
The ring $D = \mathbb{Z} + T\mathbb{Q}[T]$, where $T$ is an indeterminate over $\mathbb{Q}$, is known to be a two-dimensional Pr\"ufer domain. According to Theorem \ref{dk}, $\intR(E,D)$ is a three-dimensional Jaffard domain, for any fractional subset $E$ of $D.$
\end{example}

\begin{example}\label{Exa2}
Let $S$ be the multiplicative subset of $\mathbb{Z}$ generated by the set of primes consisting of $2$ and all odd primes $p$ such that of prime numbers $p\equiv 1(\mod 4),$ and set $D=S^{-1}\mathbb{Z}.$ As $D$ is a Dedekind domain, it follows from Theorem \ref{dk} that $\intR(E,D)$ is a two-dimensional Jaffard domain, for any fractional subset $E$ of $D.$
\end{example}

\begin{remark}
\rm(1)  Since the integral domain $D$ in Example \ref{Exa1} lies between $\mathbb{Z}[T]$ and $\mathbb{Q}[T]$, it follows from \cite[Lemma VII.2.10]{C97} that $D$ is a $d$-ring, and thus  $\intR(D)$ coincides with the classical ring  $\int(D).$ However, it remains an open question whether $\intR(E,D)\neq \int(D)$ for some subsets $E$ of $D.$

\rm(2) In \cite[Example 1.11]{Lo88}, the author shows that the integral domain $D$ presented in Example \ref{Exa2} is not a $d$-ring. Thus, for any subset $E$ of $D$, we have $\intR(E,D)\neq\int(D)$, as $\intR(D)$ is always a subring of $\intR(E,D)$.  
\end{remark}

In what follows, we focus on the Krull dimension of $\intR(E,D)$ when $D$ is a pullback and $E$ a subset of the quotient field of $D.$ Specifically, for certain pairs of integral domains $D\subset B$ and subsets $E$ of the quotient field of $D,$ we establish a bound on $\dim(\intR(E,D))$ in terms of $\dim(D)$ and $\dim_v(B).$  The obtained results are particularly useful for studying pseudo-valuation domains, as some of them are classical examples of non-Jaffard domains. 

\begin{theorem}\label{Psv}
Let $D\subset B$ be a pair of integral domains sharing a common nonzero ideal $I$ and $E$ a nonempty subset of $K,$ where $K$ is the quotient field of both $D$ and $B$. If $D/I$ is finite, then $$\dim(\intR(E,D))\leqslant \dim(\intR(E,B)).$$ If, moreover, $E$ is a fractional subset of $D$ contained in $B$, then $$1+\dim(D)\leqslant\dim(\intR(E,D))\leqslant 1+\dim_v(B).$$
\end{theorem}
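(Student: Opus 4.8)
The plan is to deduce the theorem from a general principle about common ideals, applied to the overring extension $\intR(E,D)\subseteq\intR(E,B)$. I would set
$$\mathfrak{J}:=\{\varphi\in\intR(E,B)\;:\;\varphi(E)\subseteq I\}.$$
First I would check that $\mathfrak{J}$ is a nonzero ideal shared by $\intR(E,D)$ and $\intR(E,B)$. Since $I\subseteq D$, every $\varphi\in\mathfrak{J}$ already lies in $\intR(E,D)$, so $\mathfrak{J}\subseteq\intR(E,D)\subseteq\intR(E,B)$; since $I$ is an ideal of $B$, for $\psi\in\intR(E,B)$ and $\varphi\in\mathfrak{J}$ one has $(\psi\varphi)(e)=\psi(e)\varphi(e)\in B\cdot I\subseteq I$ for every $e\in E$, whence $\psi\varphi\in\mathfrak{J}$; and $\mathfrak{J}$ contains every nonzero constant lying in $I$, so $\mathfrak{J}\neq(0)$.

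The decisive use of the hypothesis is that finiteness of $D/I$ forces $\dim(\intR(E,D)/\mathfrak{J})=0$. Pick representatives $d_1,\dots,d_m\in D$ of the finitely many classes of $D/I$. For any $\varphi\in\intR(E,D)$ and any $e\in E$, the value $\varphi(e)\in D$ is congruent modulo $I$ to some $d_j$, so one factor of $\prod_{j=1}^{m}(\varphi(e)-d_j)$ lies in $I$ and hence so does the whole product; therefore $\prod_{j=1}^{m}(\varphi-d_j)\in\mathfrak{J}$. Thus every element of $\intR(E,D)/\mathfrak{J}$ is a root of the fixed monic polynomial $\prod_{j=1}^{m}(Y-\overline{d_j})$ over the image of $D$ in $\intR(E,D)/\mathfrak{J}$, which is a quotient of $D/I$ and hence finite. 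Consequently $\intR(E,D)/\mathfrak{J}$ is integral over a zero-dimensional ring, so, integral extensions preserving Krull dimension, $\dim(\intR(E,D)/\mathfrak{J})=0$; in particular the primes of $\intR(E,D)$ that contain $\mathfrak{J}$ are pairwise incomparable.

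The heart of the proof — and the step I expect to be the main obstacle — is to show $\dim(\intR(E,D))\leqslant\dim(\intR(E,B))$. Because $\mathfrak{J}$ is a common ideal, a routine localization argument (for $0\neq s\in\mathfrak{J}$ one has $s\,\intR(E,B)\subseteq\mathfrak{J}\subseteq\intR(E,D)$, hence $\intR(E,B)_s=\intR(E,D)_s$) shows that contraction induces an order isomorphism between $\mathrm{Spec}(\intR(E,B))\setminus V(\mathfrak{J})$ and $\mathrm{Spec}(\intR(E,D))\setminus V(\mathfrak{J})$. Now let $\mathfrak{P}_0\subsetneq\cdots\subsetneq\mathfrak{P}_n$ be a chain of primes of $\intR(E,D)$; if $n=0$ there is nothing to prove, so assume $n\geqslant 1$. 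If no $\mathfrak{P}_i$ contains $\mathfrak{J}$, the chain transfers, via that isomorphism, to a chain of length $n$ in $\intR(E,B)$. Otherwise, by the incomparability noted above only $\mathfrak{P}_n$ contains $\mathfrak{J}$; I would lift $\mathfrak{P}_0\subsetneq\cdots\subsetneq\mathfrak{P}_{n-1}$ to a chain $\mathfrak{Q}_0\subsetneq\cdots\subsetneq\mathfrak{Q}_{n-1}$ in $\intR(E,B)$ with $\mathfrak{Q}_{n-1}\cap\intR(E,D)=\mathfrak{P}_{n-1}$, and then add one prime at the top: since $\mathfrak{P}_{n-1}+\mathfrak{J}\subseteq\mathfrak{P}_n\subsetneq\intR(E,D)$, and since $1\in\mathfrak{Q}_{n-1}+\mathfrak{J}$ would force $1\in(\mathfrak{Q}_{n-1}\cap\intR(E,D))+\mathfrak{J}=\mathfrak{P}_{n-1}+\mathfrak{J}$ (using $\mathfrak{J}\subseteq\intR(E,D)$), the ideal $\mathfrak{Q}_{n-1}+\mathfrak{J}$ of $\intR(E,B)$ is proper and so lies in some prime $\mathfrak{Q}_n$; as $\mathfrak{J}\subseteq\mathfrak{Q}_n$ but $\mathfrak{J}\not\subseteq\mathfrak{Q}_{n-1}$, the inclusion $\mathfrak{Q}_{n-1}\subsetneq\mathfrak{Q}_n$ is strict. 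In all cases $\dim(\intR(E,B))\geqslant n$, and taking the supremum over chains gives $\dim(\intR(E,D))\leqslant\dim(\intR(E,B))$.

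Finally I would derive the ``moreover'' part from the earlier results. The left inequality $1+\dim(D)\leqslant\dim(\intR(E,D))$ is exactly Proposition \ref{kd}, as $E$ is a fractional subset of $D$. For the right inequality, note that $E$ being a fractional subset of $D$ together with $D\subseteq B$ makes $E$ a fractional subset of $B$, so Proposition \ref{valudim} applied to $B$ yields $\dim_v(\intR(E,B))=1+\dim_v(B)$; combining this with the inequality just established, $\dim(\intR(E,D))\leqslant\dim(\intR(E,B))\leqslant\dim_v(\intR(E,B))=1+\dim_v(B)$, as desired.
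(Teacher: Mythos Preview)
Your proof is correct and follows essentially the same route as the paper: both identify the common nonzero ideal $\intR(E,I)$ (your $\mathfrak{J}$) of $\intR(E,D)$ and $\intR(E,B)$, show via the product $\prod_j(\varphi-d_j)$ that $\intR(E,D)/\intR(E,I)$ is zero-dimensional, and then deduce $\dim(\intR(E,D))\leqslant\dim(\intR(E,B))$. The only differences are cosmetic: the paper invokes Cahen's inequality $\dim(R)\leqslant\dim(S)+\dim(R/I)$ for rings sharing an ideal (Lemma~\ref{Shr}) whereas you reprove the needed case by a direct chain-lifting argument, and for the upper bound $1+\dim_v(B)$ the paper uses the inclusion $B[X]\subseteq\intR(E,B)$ (from $E\subseteq B$) while you appeal to Proposition~\ref{valudim} (using only that $E$ is fractional over $B$).
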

The proof of this theorem requires the following preparatory lemmas.
\begin{lemma}\label{Shr}
Let $R\subset S$ be a pair of integral domains sharing a common nonzero ideal $I$. Then, $\dim(R)\leqslant\dim(S)+\dim(R/I).$
\end{lemma}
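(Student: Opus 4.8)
The plan is to combine a chain-counting estimate with an induction on $\dim(R)$. First I would record two preliminary facts. Since $I\neq(0)$ is an ideal of both $R$ and $S$, for any nonzero $x\in I$ and any $s\in S$ we have $xs\in I\subseteq R$, whence $s=(xs)/x\in\mathrm{Frac}(R)$; thus $S$ is an overring of $R$. Moreover $S$ is not a field, since otherwise $S=\mathrm{Frac}(R)$ has no nonzero proper ideal, forcing $I=S$ and hence $R=S$; therefore $\dim(S)\geq 1$. Next, if $\mathfrak{p}\in\mathrm{Spec}(R)$ with $I\not\subseteq\mathfrak{p}$, then choosing $x\in I\setminus\mathfrak{p}$ the same computation yields $S\subseteq R_{\mathfrak{p}}$; setting $\mathfrak{q}:=\mathfrak{p}R_{\mathfrak{p}}\cap S$, one checks that $\mathfrak{q}\cap R=\mathfrak{p}$, that every element of $S\setminus\mathfrak{q}$ is a unit of $R_{\mathfrak{p}}$, and that $R\setminus\mathfrak{p}\subseteq S\setminus\mathfrak{q}$; hence $R_{\mathfrak{p}}=S_{\mathfrak{q}}$ and consequently $\mathrm{ht}_{R}(\mathfrak{p})=\dim(R_{\mathfrak{p}})=\dim(S_{\mathfrak{q}})=\mathrm{ht}_{S}(\mathfrak{q})\leq\dim(S)$.

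We may assume $\dim(S)<\infty$ and $\dim(R/I)<\infty$. A direct count then already shows $\dim(R)<\infty$: in any chain of primes of $R$, the members not containing $I$ form an initial segment whose largest element has height at most $\dim(S)$ by the previous step, and the members containing $I$ form a final segment which yields a chain in $R/I$, so the chain has length at most $\dim(S)+\dim(R/I)+1$. The extra $+1$ is exactly the \textit{transition step} between a prime not containing $I$ and one containing $I$, and removing it is the heart of the matter; I would do this by induction on $d:=\dim(R)$, the case $d=0$ being clear.

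For the inductive step, choose a chain $(0)=\mathfrak{p}_{0}\subsetneq\mathfrak{p}_{1}\subsetneq\cdots\subsetneq\mathfrak{p}_{d}$ of primes of $R$ of length $d$ (necessarily $\mathfrak{p}_{0}=(0)$). If $I\subseteq\mathfrak{p}_{1}$, then $\mathfrak{p}_{1},\dots,\mathfrak{p}_{d}$ all contain $I$, so $\dim(R/I)\geq d-1$ and, since $\dim(S)\geq 1$, we obtain $d\leq\dim(S)+\dim(R/I)$. If $I\not\subseteq\mathfrak{p}_{1}$, let $\mathfrak{q}_{1}:=\mathfrak{p}_{1}R_{\mathfrak{p}_{1}}\cap S$ as above, so $\mathfrak{q}_{1}\cap R=\mathfrak{p}_{1}$ and $R_{\mathfrak{p}_{1}}=S_{\mathfrak{q}_{1}}$; this equality forces $R/\mathfrak{p}_{1}$ and $S/\mathfrak{q}_{1}$ to have the same quotient field (the common residue field of $R_{\mathfrak{p}_{1}}=S_{\mathfrak{q}_{1}}$), so $\overline{R}:=R/\mathfrak{p}_{1}\subseteq\overline{S}:=S/\mathfrak{q}_{1}$ is again a pair of domains with the same quotient field, and the image $\overline{I}$ of $I$ in $\overline{S}$ is a nonzero ideal of $\overline{S}$ (nonzero because $I\not\subseteq\mathfrak{q}_{1}$) contained in $\overline{R}$, hence a common nonzero ideal. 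Since $\mathfrak{p}_{1}\neq(0)$ we have $\dim(\overline{R})\leq d-1<d$, so the induction hypothesis gives $\dim(\overline{R})\leq\dim(\overline{S})+\dim(\overline{R}/\overline{I})$. Now the three dimensions cooperate: $\dim(\overline{S})=\dim(S/\mathfrak{q}_{1})\leq\dim(S)-\mathrm{ht}_{S}(\mathfrak{q}_{1})\leq\dim(S)-1$ because $\mathfrak{q}_{1}\neq(0)$; $\dim(\overline{R}/\overline{I})=\dim\bigl(R/(I+\mathfrak{p}_{1})\bigr)\leq\dim(R/I)$ since $R/(I+\mathfrak{p}_{1})$ is a quotient of $R/I$; and $\dim(\overline{R})\geq d-1$ since the chain $\mathfrak{p}_{1}\subsetneq\cdots\subsetneq\mathfrak{p}_{d}$ descends to $\overline{R}$. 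Combining, $d-1\leq(\dim(S)-1)+\dim(R/I)$, that is, $\dim(R)\leq\dim(S)+\dim(R/I)$, completing the induction.

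The principal obstacle is the transition step noted above: the order-theoretic estimate inevitably wastes one unit, and it is the inductive reduction to $R/\mathfrak{p}_{1}\subseteq S/\mathfrak{q}_{1}$ — where the drop $\dim(S/\mathfrak{q}_{1})\leq\dim(S)-1$ precisely compensates for it — that makes the bound sharp. A secondary point requiring care is the verification that $S/\mathfrak{q}_{1}$ is an overring of $R/\mathfrak{p}_{1}$, which relies on the identification $R_{\mathfrak{p}_{1}}=S_{\mathfrak{q}_{1}}$ of local rings and the resulting equality of residue fields. (If one prefers the non-strict hypothesis $R\subseteq S$, the argument is unchanged, since in the degenerate case $R/\mathfrak{p}_{1}=S/\mathfrak{q}_{1}$ the inequality invoked from the induction hypothesis is trivially true.)
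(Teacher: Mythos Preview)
Your argument is correct. The paper does not prove this lemma at all: it simply refers the reader to \cite[Corollaire~1 du Th\'eor\`eme~1, p.~509]{C88}. In place of that one-line citation you supply a complete, self-contained proof, whose mechanism --- the identification $R_{\mathfrak p}=S_{\mathfrak q}$ for primes $\mathfrak p\not\supseteq I$, followed by induction on $\dim(R)$ through the quotient pair $R/\mathfrak p_{1}\subseteq S/\mathfrak q_{1}$, with the drop $\dim(S/\mathfrak q_{1})\le\dim(S)-1$ absorbing the ``transition step'' --- is the standard route to such shared-ideal dimension bounds and is in the same spirit as the cited source. One cosmetic remark: under the strict hypothesis $R\subsetneq S$ the case $d=0$ is in fact vacuous (if $\dim(R)=0$ then $R$ is a field, so $I=R$, whence $1\in I$ forces $I=S$ and $R=S$), so the induction effectively begins at $d=1$, which your Case~1 already covers.
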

\begin{proof}
See \cite[Corollaire 1 du Théorème 1, page 509]{C88}.
\end{proof}

For any ideal $I$ of $D$, we set $\intR(E,I):=\{\varphi\in K(X);\,\varphi(E)\subseteq I\}.$

\begin{lemma}\label{Cid}
Let $I$ be an ideal of $D$ and $E$ a nonempty subset of $K$. We have
\begin{enumerate}[$(1)$]  
\item  $\intR(E,I)$ is an ideal of $\intR(E,D)$.
\item  If $D/I$ is finite, then the ring $\intR(E,D)/\intR(E,I)$ is zero-dimensional.
\end{enumerate} 
\end{lemma}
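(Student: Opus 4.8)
The plan is to dispose of $(1)$ by a direct check and to reduce $(2)$ to the existence of a uniform exponent $N$ satisfying $c^{N}=c^{2N}$ for all $c\in D/I$.

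For part $(1)$: since $I\subseteq D$ we have $\intR(E,I)\subseteq\intR(E,D)$, and $0\in\intR(E,I)$. Every element of these sets is pole-free at each point of $E$ by definition, so for $a\in E$ and $\varphi,\psi\in\intR(E,I)$ the rational functions $\varphi+\psi$ and $\varphi\psi$ are again pole-free at $a$, with $(\varphi+\psi)(a)=\varphi(a)+\psi(a)$ and $(\varphi\psi)(a)=\varphi(a)\psi(a)$; as $I$ is an ideal of $D$, both values lie in $I$. The same computation with $\varphi\in\intR(E,I)$ and $\psi\in\intR(E,D)$ gives $(\psi\varphi)(a)=\psi(a)\varphi(a)\in DI\subseteq I$. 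Hence $\intR(E,I)$ is an additive subgroup of $\intR(E,D)$ stable under multiplication by $\intR(E,D)$, i.e.\ an ideal; in particular the quotient ring appearing in $(2)$ is well defined.

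For part $(2)$, write $A:=D/I$ and consider the evaluation homomorphism $\Phi\colon\intR(E,D)\longrightarrow\prod_{a\in E}A$, $\varphi\longmapsto\big(\varphi(a)+I\big)_{a\in E}$ (a ring homomorphism for the same reason as above), whose kernel is exactly $\intR(E,I)$. Thus $R:=\intR(E,D)/\intR(E,I)$ embeds as a subring of $\prod_{a\in E}A$. One must be careful not to conclude immediately that $R$ is zero-dimensional ``because $\prod_{a\in E}A$ is'': a subring of a zero-dimensional ring need not be zero-dimensional, as $\mathbb Z\subset\mathbb Q$ shows. Instead I would exploit that $A$ is \emph{finite}: the multiplicative monoid $(A,\cdot)$ is finite, so the sequence of powers of any $c\in A$ is eventually periodic, and hence there is an integer $N\geq1$, depending only on $A$, with $c^{N}=c^{2N}$ for all $c\in A$. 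Read coordinatewise, this gives $u^{N}=u^{2N}$ for every $u\in\prod_{a\in E}A$, and the identity persists in the subring $R$: every $x\in R$ satisfies $x^{N}=x^{2N}=x^{N+1}\,x^{N-1}$.

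It then remains to invoke the elementary characterization that makes such a ring zero-dimensional: if $\mathfrak p$ is a prime ideal of $R$ and $x\notin\mathfrak p$, then $x^{N}(x^{N}-1)=x^{2N}-x^{N}=0\in\mathfrak p$ forces $x^{N}\equiv1\pmod{\mathfrak p}$, so the image of $x$ in $R/\mathfrak p$ is a unit; hence $R/\mathfrak p$ is a field and $\mathfrak p$ is maximal, i.e.\ $\dim R=0$. The only genuine obstacle is the extraction of the single exponent $N$ valid simultaneously for all $a\in E$, which is exactly where the hypothesis that $D/I$ be \emph{finite} (rather than merely zero-dimensional) is used; once $N$ is fixed, pulling the identity $x^{N}=x^{2N}$ back to $\intR(E,D)$ is automatic, since for any representative $\varphi$ of $x$ the powers $\varphi^{N},\varphi^{2N}$ are pole-free on $E$ and $\varphi(a)^{N}-\varphi(a)^{2N}\in I$ for each $a\in E$, so $\varphi^{N}-\varphi^{2N}\in\intR(E,I)$.
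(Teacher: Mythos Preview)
Your proof is correct. Part~(1) is a direct verification, as in the paper. For part~(2), however, your route differs from the paper's. The paper fixes a prime $\mathfrak{P}\supseteq\intR(E,I)$, picks representatives $u_1,\dots,u_r$ of $D/I$, and observes that for any $\varphi\in\intR(E,D)$ the product $\prod_{i=1}^{r}(\varphi-u_i)$ lies in $\intR(E,I)\subseteq\mathfrak{P}$; primality forces $\varphi\equiv u_i\pmod{\mathfrak{P}}$ for some $i$, so $\intR(E,D)/\mathfrak{P}$ has at most $r$ elements and is therefore a field. You instead embed the quotient $R$ into $\prod_{a\in E}(D/I)$ and exploit that a finite ring $A$ admits a uniform exponent $N$ with $c^{N}=c^{2N}$ for all $c\in A$, whence the same identity holds in $R$ and forces every prime to be maximal. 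The paper's argument is more concrete---it actually bounds the size of each residue field by $|D/I|$---while yours isolates a clean structural principle (any subring of a power of a fixed finite ring is zero-dimensional) and makes explicit, via the need for a single $N$, where finiteness of $D/I$ is essential. The stray factorization $x^{2N}=x^{N+1}x^{N-1}$ is not used and could be dropped.
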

\begin{proof}
(1) This is clear.
(2) Assume that $D/I$ is finite with cardinality $r.$ To prove that the ring $\intR(E,D)/\intR(E,I)$ is zero-dimensional,  it suffices to show that any prime ideal of $\intR(E,D)$ containing $\intR(E,I)$ is maximal (a fact apperead in \cite{C97} as an exercise). So, let $\mathfrak{P}$ be such a prime ideal, and consider  a set of representatives $\lbrace u_1,...,u_r\rbrace$ of $D$ modulo $I.$ Arguing as in the proof of \cite[Lemma V.1.9]{C97}, we take an element $\varphi$ in $\intR(E,D).$ Then, we have that the product $\prod_{i=1}^{r}(\varphi(X)-u_i)$ lies in $\intR(E,I)$, and hence in $\mathfrak{P}$. Thus one of its factors must also lie in $\mathfrak{P}$, and therefore $\lbrace u_1,...,u_r\rbrace$ is a set of representatives of $\intR(E,D)$ modulo $\mathfrak{P}$. Consequently, $\intR(E,D)/\mathfrak{P}$ is a field, which is equivalent to saying that $\mathfrak{P}$ is maximal, and so we deduce that $\dim(\intR(E,D)/\intR(E,I))=0$.
\end{proof}

\begin{proof}[Proof of Theorem \ref{Psv}]
Since $\intR(E,D)\subset\intR(E,B)$ and $\intR(E,I)$ is a common ideal of $\intR(E,D)$ and $\intR(E,B),$ we consider the following pullback diagram:
$$\xymatrix{
    \intR(E,D) \ar[r] \ar[d]  & \intR(E,D)/\intR(E,I) \ar[d] \\
    \intR(E,B) \ar[r] & \intR(E,B)/\intR(E,I),
  }$$ and so it follows from Lemma \ref{Shr}  that $$\dim(\intR(E,D))\leqslant\dim(\intR(E,B))+\dim(\intR(E,D)/\intR(E,I)).$$
By Lemma \ref{Cid},  $\dim(\intR(E,D)/\intR(E,I))=0,$ and hence, $\dim(\intR(E,D))\leqslant \dim(\intR(E,B)),$ as wanted.\\
We now prove the moreover statement. It is clear that the left inequality follows from Proposition \ref{kd}. For the right inequality, from the inclusion  $B[X]\subseteq\intR(E,B)$ we deduce that $\dim_v(\intR(E,B))\leqslant\dim_v(B[X]),$ and so $\dim(\intR(E,B))\leqslant1+\dim_v(B).$ Since $\dim(\intR(E,D))\leqslant \dim(\intR(E,B))$ (as previously established), it follows that $\dim(\intR(E,D))\leqslant1+\dim_v(B).$
\end{proof}
As an immediate application of Theorem \ref{Psv}, we derive the following:
\begin{corollary}
Let $D\subset B$ be a pair of integral domains sharing a common nonzero ideal $I$ and $E$ a fractional subset of $D$ contained  in $B$. If $D/I$ is finite and $B$ is Jaffard, then $$1+\dim(D)\leqslant\dim(\intR(E,D))\leqslant 1+\dim(B).$$
\end{corollary}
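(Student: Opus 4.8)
The plan is to deduce the corollary directly from Theorem~\ref{Psv}, the only real work being to observe that the hypotheses line up verbatim and then to invoke the definition of a Jaffard domain. First I would check that the quotient-field hypothesis of Theorem~\ref{Psv} is automatic here: if $0\neq d\in I$ then $dB\subseteq I\subseteq D$, so every $b\in B$ equals $(db)/d$ with $db\in D$, i.e.\ $B$ is an overring of $D$ and hence $D$ and $B$ share the same quotient field $K$. Since $E$ is a fractional subset of $D$, it is in particular a nonempty subset of $K$ contained in $B$. Thus all the assumptions of Theorem~\ref{Psv} (including its ``moreover'' clause) are satisfied.

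Applying the ``moreover'' part of Theorem~\ref{Psv} then gives
$$1+\dim(D)\leqslant\dim(\intR(E,D))\leqslant 1+\dim_v(B).$$
Now I would use that $B$ is a Jaffard domain, which by definition means $\dim(B)=\dim_v(B)$ with this common value finite. Substituting $\dim_v(B)=\dim(B)$ into the right-hand side yields exactly
$$1+\dim(D)\leqslant\dim(\intR(E,D))\leqslant 1+\dim(B),$$
as claimed.

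There is essentially no obstacle in this argument; the corollary is a one-line specialization of Theorem~\ref{Psv}. The only point worth spelling out is the remark above that a common nonzero ideal forces $B$ to be an overring of $D$, so the ``common quotient field'' assumption of Theorem~\ref{Psv} need not be imposed separately. One might additionally note that this is precisely the form in which the theorem becomes useful for pullback constructions such as pseudo-valuation domains, where the overring $B$ is a valuation domain and hence trivially Jaffard, so the valuative-dimension bound collapses to a genuine Krull-dimension bound.
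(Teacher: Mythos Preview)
Your argument is correct and matches the paper's approach: the corollary is stated there as an immediate application of Theorem~\ref{Psv}, with the Jaffard hypothesis on $B$ used only to replace $\dim_v(B)$ by $\dim(B)$. Your extra remark that a common nonzero ideal forces $B$ to be an overring of $D$ (so the shared quotient field is automatic) is a nice clarification that the paper leaves implicit.
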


Following \cite{HH78}, an integral domain $D$ is called a \textit{pseudo-valuation domain} (in short, a PVD) if every prime ideal $\mathfrak{p}$ has the property that whenever a product of two elements of the quotient field of $D$ lies in $\mathfrak{p},$ then one of the given elements is in $\mathfrak{p}$. In \cite{HH78}, the authors showed that valuation domains form a proper subclass of PVDs.  Furthermore, they established that a PVD $D$ that is not valuation must be a local domain sharing its maximal ideal $\mathfrak{m}$ with a valuation overring $V$.

\begin{corollary}\label{CorPVD}
Let $(D,\mathfrak{m})$ be a PVD with associated valuation overring $V$ and $E$ a fractional subset of $D$ contained in $V.$ If the residue field of $D$ is finite, then $$\dim(\intR(E,D))=1+\dim(D).$$ 
\end{corollary}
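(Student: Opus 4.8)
The plan is to deduce this from Theorem \ref{Psv} applied to the pair $D\subset V$. First I would dispose of two degenerate cases. If $\dim(D)=\infty$, then Proposition \ref{kd} already gives $\dim(\intR(E,D))\geqslant 1+\dim(D)=\infty$, so the asserted equality holds trivially. If $D$ is itself a valuation domain, then (having finite dimension) it is a Pr\"ufer domain, hence Jaffard, and Corollary \ref{ega} yields $\dim(\intR(E,D))=1+\dim(D)$ directly. So from now on I would assume that $D$ is a finite-dimensional PVD which is not a valuation domain.

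Under that assumption, the structure theory of pseudo-valuation domains from \cite{HH78} tells us that $D$ is local, that its maximal ideal $\mathfrak{m}$ is nonzero (otherwise $D$ would be a field, hence a valuation domain) and coincides with the maximal ideal of $V$, and that $D\subset V$. In particular $D$ and $V$ share the common nonzero ideal $I:=\mathfrak{m}$, the quotient $D/\mathfrak{m}$ is the residue field of $D$ and hence finite by hypothesis, and $E$ is a fractional subset of $D$ contained in $B:=V$. Thus all the hypotheses of the ``moreover'' part of Theorem \ref{Psv} are satisfied, and it gives
$$1+\dim(D)\leqslant\dim(\intR(E,D))\leqslant 1+\dim_v(V).$$

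It then remains to identify $\dim_v(V)$ with $\dim(D)$. Since $V$ is a valuation domain it is a Pr\"ufer domain, so $\dim_v(V)=\dim(V)$; and it is classical (see \cite{HH78}) that a pseudo-valuation domain and its associated valuation overring share the same prime spectrum, whence $\dim(V)=\dim(D)$. Feeding this back into the displayed inequalities collapses them to $\dim(\intR(E,D))=1+\dim(D)$, as wanted. I do not anticipate a genuine obstacle here: the argument is essentially an invocation of Theorem \ref{Psv}, and the only ingredient from outside the paper is the equality $\dim(V)=\dim(D)$ for a PVD, which is standard. The one point that requires a little care is the preliminary splitting off of the cases where $D$ is already a valuation domain (so that Theorem \ref{Psv}, which needs the strict inclusion $D\subset B$, is not directly applicable) and where $\dim(D)$ is infinite.
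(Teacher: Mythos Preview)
Your proof is correct and follows essentially the same route as the paper: apply Theorem \ref{Psv} (the paper channels it through the immediately preceding corollary, using that $V$ is Jaffard) with $B=V$ and $I=\mathfrak{m}$, and then invoke $\dim_v(V)=\dim(V)=\dim(D)$. Your explicit handling of the degenerate cases $\dim(D)=\infty$ and $D=V$ (where the strict inclusion $D\subset B$ required by Theorem \ref{Psv} fails) is a nice bit of extra care that the paper's one-line proof leaves implicit.
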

\begin{proof}
The desired equality follows by applying the previous corollary with $B=V$ and $I=\mathfrak{m}$, and by using the well-known fact that $D$ and $V$ have the same Krull dimension. 
\end{proof}
\begin{remark}
From Proposition \ref{valudim} and Corollary \ref{CorPVD}, we can determine whether the Jaffard property holds for $\intR(E,D)$ when $D$ is a PVD with finite residue field (for instance, see Example \ref{ExJaf}). In fact, in this case, it follows from Theorem \ref{dk} that $\intR(E,D)$ is Jaffard if and only if so is $D$.
\end{remark}
Here are two illustrative examples.

\begin{example}
Let $k$ be a finite field and $L$ a field containing $k.$ Consider the domain $D:=k+TL[[T]],$ where $T$ is an indeterminate over $L.$ This integral domain is known to be a one-dimensional PVD with a finite residue field, and, in addition, it is Jaffard if and only if the field extension $L/k$ is algebraic (\cite[Proposition 2.5\rm(b)]{ABDFK88}). Therefore, by applying Corollary \ref{CorPVD}, the Krull dimension of $\intR(E,D)$ is equal to two, for any subset $E$ of $D.$ 
\end{example}

\begin{example}\label{ExJaf}
 Let  $Y$ and $Z$ be two indeterminates over a finite field $k$ and set $D=k+Zk(Y)[Z]_{(Z)}$. Let $E$ be a fractional subset of $D$. It is known that $D$ is a one-dimensional PVD that is not Jaffard because $k[Y]_{(Y)}+Zk(Y)[Z]_{(Z)}$ is a two-dimensional valuation overring of $D.$ However, as noticed in \cite[Example 3.6]{FK90}, $D[X]$ is a three-dimensional Jaffard domain. By Corollary \ref{CorPVD}, we have   $\dim(\intR(E,D))=1+\dim(D)=2.$ Moreover, it follows from Proposition \ref{valudim} that $\dim_v(\intR(E,D))=1+\dim_v(D)=3,$ and thus $\intR(E,D)$ is not Jaffard. 
\end{example}
\begin{remark}
\rm(1) As mentioned in the introduction, a semi-local domain cannot be a $d$-ring. Consequently, in the two previous examples, $\intR(D),$  and hence $\intR(E,D),$ differs from the classical ring $\int(D).$



\rm(2) We point out that Example \ref{ExJaf} shows also that the inequality in Corollary \ref{CoDX} may be strict, that is, $\dim(\intR(E,D)) < \dim(D[X])$.
\end{remark}

We now provide a result on the Krull dimension of $\intR(E,D)$ when $D$ has  finite character. This is in fact the analogue of \cite[Proposition 1.7]{T97}. 

An integral domain $D$ is said to be of \textit{finite character}, if every nonzero element of $D$ belongs to only finitely many maximal ideals of $D$. Clearly, semi-local domains and Dedekind domains are examples of domains with finite character.

\begin{lemma}[{\cite[Lemma 1.6]{T97}}]\label{tlf}
Assume that $D$ has finite character and for each maximal ideal $\mathfrak{m}$ of $D$, $D_\mathfrak{m}$ shares the maximal ideal $\mathfrak{m}D_\mathfrak{m}$ with an overring $W_\mathfrak{m}$, such that $\mathfrak{m}D_\mathfrak{m}$ is a prime ideal in $W_\mathfrak{m}$. We fix a maximal ideal $\mathfrak{m}_{0}$ of $D$ and we set $C=C(\mathfrak{m}_{0})=W_{\mathfrak{m}_{0}}\cap (\cap_{\mathfrak{m}\neq \mathfrak{m}_{0}}D_\mathfrak{m})$. Then, $\mathfrak{m}_{0}$ is a prime ideal in $C$ and $C_{\mathfrak{m}_{0}}=W_{\mathfrak{m}_{0}}.$
\end{lemma}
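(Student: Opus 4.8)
The plan is to treat the two assertions in turn, using throughout the identity $D=\bigcap_{\mathfrak{m}\in\mathrm{Max}(D)}D_\mathfrak{m}$. Since $D_{\mathfrak{m}_0}\subseteq W_{\mathfrak{m}_0}$, this already gives $D\subseteq C$ and, intersecting with $D_{\mathfrak{m}_0}$, the identity $C\cap D_{\mathfrak{m}_0}=D_{\mathfrak{m}_0}\cap\bigcap_{\mathfrak{m}\neq\mathfrak{m}_0}D_\mathfrak{m}=D$. First I would check that $\mathfrak{m}_0 D_{\mathfrak{m}_0}\cap C=\mathfrak{m}_0$: the inclusion $\supseteq$ is clear, and if $x\in C$ lies in $\mathfrak{m}_0 D_{\mathfrak{m}_0}$ then $x\in C\cap D_{\mathfrak{m}_0}=D$, so $x\in D\cap\mathfrak{m}_0 D_{\mathfrak{m}_0}=\mathfrak{m}_0$. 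As $\mathfrak{m}_0 D_{\mathfrak{m}_0}$ is, by hypothesis, an ideal of $W_{\mathfrak{m}_0}$ and $C\subseteq W_{\mathfrak{m}_0}$, the subset $\mathfrak{m}_0=\mathfrak{m}_0 D_{\mathfrak{m}_0}\cap C$ is an ideal of $C$; and since $\mathfrak{m}_0 D_{\mathfrak{m}_0}$ is prime in $W_{\mathfrak{m}_0}$, its contraction $\mathfrak{m}_0$ is prime in $C$. This settles the first assertion.

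For $C_{\mathfrak{m}_0}=W_{\mathfrak{m}_0}$ — where $C_{\mathfrak{m}_0}$ denotes localization at the multiplicative set $D\setminus\mathfrak{m}_0$, equivalently the localization of $C$ at its prime $\mathfrak{m}_0$ — the inclusion $C_{\mathfrak{m}_0}\subseteq W_{\mathfrak{m}_0}$ is immediate, since $C\subseteq W_{\mathfrak{m}_0}$ and every element of $D\setminus\mathfrak{m}_0$ is a unit of $D_{\mathfrak{m}_0}$, hence of $W_{\mathfrak{m}_0}$. For the reverse inclusion I would fix $w\in W_{\mathfrak{m}_0}$ and consider the denominator ideal $\mathfrak{d}=\{d\in D:\ dw\in D\}$, which is nonzero because $w\in K=\mathrm{Frac}(D)$. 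By the finite character of $D$, the set $\Lambda=\{\mathfrak{m}\in\mathrm{Max}(D):\ w\notin D_\mathfrak{m}\}=\{\mathfrak{m}\in\mathrm{Max}(D):\ \mathfrak{d}\subseteq\mathfrak{m}\}$ is finite (any nonzero element of $\mathfrak{d}$ already lies in only finitely many maximal ideals). The strategy is then, for each $\mathfrak{m}_i\in\Lambda\setminus\{\mathfrak{m}_0\}$, to produce $s_i\in D\setminus\mathfrak{m}_0$ with $s_i w\in D_{\mathfrak{m}_i}$; letting $s$ be the (finite) product of the $s_i$, one gets $s\in D\setminus\mathfrak{m}_0$, with $sw\in D_\mathfrak{m}$ for every $\mathfrak{m}\neq\mathfrak{m}_0$ (trivially when $\mathfrak{m}\notin\Lambda$, and because $s$ lies in the ideal $\{d\in D:\ dw\in D_{\mathfrak{m}_i}\}$ otherwise) and $sw\in W_{\mathfrak{m}_0}$ since $s\in D\subseteq W_{\mathfrak{m}_0}$, whence $sw\in C$ and $w=(sw)/s\in C_{\mathfrak{m}_0}$.

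The crux is producing $s_i$, that is, showing $\{d\in D:\ dw\in D_{\mathfrak{m}_i}\}\not\subseteq\mathfrak{m}_0$, and here the two hypotheses must be combined. I would isolate first the claim that $W_{\mathfrak{m}_0}\subseteq D_\mathfrak{q}$ for every prime $\mathfrak{q}$ of $D$ with $\mathfrak{q}\subset\mathfrak{m}_0$: since $\mathfrak{m}_0 D_{\mathfrak{m}_0}$ is an ideal of $W_{\mathfrak{m}_0}$ and $w\in W_{\mathfrak{m}_0}$, we have $w\,\mathfrak{m}_0 D_{\mathfrak{m}_0}\subseteq\mathfrak{m}_0 D_{\mathfrak{m}_0}\subseteq D_{\mathfrak{m}_0}$, so the ideal $\mathfrak{c}=\{x\in D_{\mathfrak{m}_0}:\ xw\in D_{\mathfrak{m}_0}\}$ of $D_{\mathfrak{m}_0}$ contains the maximal ideal $\mathfrak{m}_0 D_{\mathfrak{m}_0}$; thus $\mathfrak{c}=D_{\mathfrak{m}_0}$ (i.e.\ $w\in D_{\mathfrak{m}_0}$) or $\mathfrak{c}=\mathfrak{m}_0 D_{\mathfrak{m}_0}$, and in either case $\mathfrak{c}\not\subseteq\mathfrak{q}D_{\mathfrak{m}_0}$, which forces $w\in(D_{\mathfrak{m}_0})_{\mathfrak{q}D_{\mathfrak{m}_0}}=D_\mathfrak{q}$. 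Now take $\mathfrak{m}_i\in\Lambda\setminus\{\mathfrak{m}_0\}$; as $\mathfrak{m}_i$ and $\mathfrak{m}_0$ are distinct maximal ideals, $\mathfrak{m}_i\cap\mathfrak{m}_0\subset\mathfrak{m}_0$, and the overring $(D_{\mathfrak{m}_i})_{D\setminus\mathfrak{m}_0}$ of $D$ — which one identifies with $\bigcap\{D_\mathfrak{q}:\ \mathfrak{q}\in\mathrm{Spec}(D),\ \mathfrak{q}\subseteq\mathfrak{m}_i\cap\mathfrak{m}_0\}$ (its surviving localizations) — contains $w$ by the claim above, every such $\mathfrak{q}$ being a proper subset of $\mathfrak{m}_0$. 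Writing $w=y/s_i$ with $y\in D_{\mathfrak{m}_i}$ and $s_i\in D\setminus\mathfrak{m}_0$ then gives $s_i w\in D_{\mathfrak{m}_i}$, as needed. I expect this conductor computation, together with the (routine) use of finite character to make $\Lambda$ finite so that a single $s$ clears all denominators simultaneously, to be the only genuinely delicate step; the remainder is bookkeeping with intersections of localizations.
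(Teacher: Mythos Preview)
The paper does not give its own proof of this lemma; it is quoted with a citation to \cite[Lemma~1.6]{T97} and used as a black box in the proof of Proposition~\ref{lpsvlf}. Your argument therefore supplies what the paper omits, and it is essentially correct: the conductor computation showing $W_{\mathfrak{m}_0}\subseteq D_\mathfrak{q}$ for every prime $\mathfrak{q}\subsetneq\mathfrak{m}_0$, together with the identification $(D_{\mathfrak{m}_i})_{D\setminus\mathfrak{m}_0}=\bigcap_{\mathfrak{q}\subseteq\mathfrak{m}_i\cap\mathfrak{m}_0}D_\mathfrak{q}$ and the finite-character reduction to finitely many bad maximal ideals, gives a clean self-contained proof.

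One small caveat: your parenthetical claim that localizing $C$ at $D\setminus\mathfrak{m}_0$ is ``equivalently the localization of $C$ at its prime $\mathfrak{m}_0$'' is not justified under the stated hypotheses. The localization $C_{C\setminus\mathfrak{m}_0}$ is automatically local, whereas nothing in the lemma forces $W_{\mathfrak{m}_0}$ to be local (the hypothesis only says $\mathfrak{m}_0 D_{\mathfrak{m}_0}$ is a \emph{prime}, not a maximal, ideal of $W_{\mathfrak{m}_0}$); so in general $C_{C\setminus\mathfrak{m}_0}$ may strictly contain $C_{D\setminus\mathfrak{m}_0}=W_{\mathfrak{m}_0}$. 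This does no damage to your argument, since every subsequent step uses only the interpretation $C_{\mathfrak{m}_0}=C_{D\setminus\mathfrak{m}_0}$, which is the paper's notational convention; simply drop the word ``equivalently''.
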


\begin{proposition}\label{lpsvlf}
Under the notation and assumptions of Lemma \ref{tlf}, if $D$ has finite residue fields and $E$ is a subset of $D$, then $$1+\dim(D)\leqslant \dim(\intR(E,D))\leqslant \max\lbrace 1+\dim_v(W_{\mathfrak{m}});\;\mathfrak{m}\in \mathrm{Max}(D) \rbrace.$$
\end{proposition}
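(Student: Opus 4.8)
The plan: the left‑hand inequality is immediate from Proposition~\ref{kd} (since $E\subseteq D$, the set $E$ is trivially a fractional subset of $D$, so $\dim(\intR(E,D))\geqslant 1+\dim(D)$, with no other hypothesis used here). For the right‑hand inequality I would localize at each maximal ideal, use finite character to reduce to finitely many of them, absorb $D$ into the overring glued from the $W_{\mathfrak{m}}$'s — the finiteness of the residue fields being exactly what kills the error term — and finish by estimating a valuative dimension.

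More precisely, since $E\subseteq D$, Fact~2 makes $\intR(E,D)$ an overring of $D[X]$, so \cite[Lemma~1.2]{Tam21} (the result behind Lemma~\ref{LemLocalDim}) gives
$$\dim(\intR(E,D))=\sup\{\dim(\intR(E,D)_{D\setminus\mathfrak{m}});\ \mathfrak{m}\in\mathrm{Max}(D)\}.$$
Since $D$ has finite character, only finitely many maximal ideals are involved in any finite chain of primes of $\intR(E,D)$, and this reduces the computation of $\dim(\intR(E,D))$ to the case $\mathrm{Max}(D)=\{\mathfrak{m}_1,\dots,\mathfrak{m}_t\}$. Iterating the construction of Lemma~\ref{tlf} — replacing $D_{\mathfrak{m}_i}$ by $W_{\mathfrak{m}_i}$ one index at a time — yields an overring $C:=\bigcap_{i=1}^{t}W_{\mathfrak{m}_i}$ of $D$ in which each $\mathfrak{m}_i$ is still prime, $C_{\mathfrak{m}_i}=W_{\mathfrak{m}_i}$, and $\dim_v(C)=\max_i\dim_v(W_{\mathfrak{m}_i})$.

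Now the conductor $J:=(D:C)$ is a common ideal of $D$ and $C$ that contains $\mathfrak{m}_1\cap\dots\cap\mathfrak{m}_t$ (if $d$ lies in that intersection then $dC\subseteq\bigcap_i dW_{\mathfrak{m}_i}\subseteq\bigcap_i\mathfrak{m}_iD_{\mathfrak{m}_i}\subseteq D$), so $D/J$ is finite. Consequently $\intR(E,J)$ is a nonzero common ideal of $\intR(E,D)$ and $\intR(E,C)$ (Lemma~\ref{Cid}(1)), and $\intR(E,D)/\intR(E,J)$ is zero-dimensional (Lemma~\ref{Cid}(2)); combining this with $\intR(E,D)\subseteq\intR(E,C)$ (Fact~1) in Lemma~\ref{Shr} gives $\dim(\intR(E,D))\leqslant\dim(\intR(E,C))$. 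Finally, since $E\subseteq D\subseteq C$, Proposition~\ref{valudim} yields
$$\dim(\intR(E,C))\leqslant\dim_v(\intR(E,C))=1+\dim_v(C)=1+\max_i\dim_v(W_{\mathfrak{m}_i})\leqslant\max\{1+\dim_v(W_{\mathfrak{m}});\ \mathfrak{m}\in\mathrm{Max}(D)\},$$
and together with the lower bound this is the assertion.

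The step I expect to be the real obstacle is the use of finite character: reducing to the case of finitely many maximal ideals, and then checking that $C=\bigcap_i W_{\mathfrak{m}_i}$ has the claimed prime-ideal and localization structure — in particular that $C_{\mathfrak{m}_i}=W_{\mathfrak{m}_i}$ and $\dim_v(C)=\max_i\dim_v(W_{\mathfrak{m}_i})$. Everything else reduces to the conductor/common-ideal technique already used for Theorem~\ref{Psv}, the new input being only that $D$ has finite character and finite residue fields; this mirrors the structure of the proof of \cite[Proposition~1.7]{T97} in the polynomial case.
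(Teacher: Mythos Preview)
Your lower bound and your use of the supremum formula $\dim(\intR(E,D))=\sup_{\mathfrak{m}}\dim(\intR(E,D)_{\mathfrak{m}})$ are fine and match the paper. The gap is exactly where you feared: the step ``finite character of $D$ reduces to the case $\mathrm{Max}(D)=\{\mathfrak{m}_1,\dots,\mathfrak{m}_t\}$'' is not justified and, as stated, does not follow. Finite character says each nonzero element of $D$ lies in only finitely many maximal ideals; it does not let you replace $D$ by a semi-local domain while preserving $\dim(\intR(E,D)_{\mathfrak{m}})$. Once you have localized at a single $\mathfrak{m}$ via the supremum formula, there is no further ``finite set of maximals'' to pass to, and your global overring $C=\bigcap_i W_{\mathfrak{m}_i}$ with its conductor $J$ never actually enters the picture. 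The subsequent claims about $C$ (that $C_{\mathfrak{m}_i}=W_{\mathfrak{m}_i}$ and $\dim_v(C)=\max_i\dim_v(W_{\mathfrak{m}_i})$) would also require a separate argument beyond a bare ``iteration'' of Lemma~\ref{tlf}.

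The paper avoids all of this by staying with one maximal ideal at a time. Fix $\mathfrak{m}$ and take $C(\mathfrak{m})=W_{\mathfrak{m}}\cap\bigl(\bigcap_{\mathfrak{n}\neq\mathfrak{m}}D_{\mathfrak{n}}\bigr)$ from Lemma~\ref{tlf}; that lemma already tells you that $\mathfrak{m}$ itself is a common ideal of $D$ and $C(\mathfrak{m})$ and that $C(\mathfrak{m})_{\mathfrak{m}}=W_{\mathfrak{m}}$ (this is where finite character is used, inside Lemma~\ref{tlf}). Hence $\intR(E,\mathfrak{m})$ is a common ideal of $\intR(E,D)$ and $\intR(E,C(\mathfrak{m}))$, and after localizing at $D\setminus\mathfrak{m}$ the pair $\intR(E,D)_{\mathfrak{m}}\subset\intR(E,C(\mathfrak{m}))_{\mathfrak{m}}$ shares $(\intR(E,\mathfrak{m}))_{\mathfrak{m}}$; since $D/\mathfrak{m}$ is finite, Lemma~\ref{Cid}(2) and Lemma~\ref{Shr} give $\dim(\intR(E,D)_{\mathfrak{m}})\leqslant\dim(\intR(E,C(\mathfrak{m}))_{\mathfrak{m}})$. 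Because $E\subseteq D\subseteq C(\mathfrak{m})$ one has $W_{\mathfrak{m}}[X]=C(\mathfrak{m})_{\mathfrak{m}}[X]\subseteq\intR(E,C(\mathfrak{m}))_{\mathfrak{m}}$, whence $\dim(\intR(E,C(\mathfrak{m}))_{\mathfrak{m}})\leqslant\dim_v(W_{\mathfrak{m}}[X])=1+\dim_v(W_{\mathfrak{m}})$. Taking the supremum over $\mathfrak{m}$ finishes. So you had the right ingredients (supremum formula, common-ideal technique, valuative bound), but there is no need for a semi-local reduction, a finite intersection of the $W_{\mathfrak{m}}$'s, or a conductor: the single ideal $\mathfrak{m}$, supplied directly by Lemma~\ref{tlf}, does the job.
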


\begin{proof}
We need only check the right inequality because the left one follows from Proposition \ref{kd}. So, let $\mathfrak{m}$ be a maximal ideal of $D$. By assumptions, it follows from  Lemma \ref{tlf} that $C(\mathfrak{m})$ and $D$ share the ideal $\mathfrak{m}$, and then $\intR(E,D)$ and $\intR(E,C(\mathfrak{m}))$ share the ideal $\intR(E,\mathfrak{m})$.  Thus, $\intR(E,D)_{\mathfrak{m}}$ and $\intR(E,C(\mathfrak{m}))_{\mathfrak{m}}$ share the ideal $(\intR(E,\mathfrak{m}))_{\mathfrak{m}}$. Since $D/\mathfrak{m}$ is finite, we deduce from Theorem \ref{Psv} that $\dim(\intR(E,D)_{\mathfrak{m}})\leqslant \dim(\intR(E,C(\mathfrak{m}))_{\mathfrak{m}}).$ On the other hand, according to Lemma \ref{tlf}, we have $(C(\mathfrak{m}))_{\mathfrak{m}}[X]=W_{\mathfrak{m}}[X]\subseteq \intR(E,C(\mathfrak{m}))_{\mathfrak{m}}$ because $C(\mathfrak{m})[X]\subseteq \intR(E,C(\mathfrak{m})).$ Hence, $\dim(\intR(E,D)_{\mathfrak{m}})\leqslant \dim(\intR(E,C(\mathfrak{m}))_{\mathfrak{m}})\leqslant \dim_{v}(W_{\mathfrak{m}}[X])=1+\dim_{v}(W_{\mathfrak{m}}).$
Thus, the conclusion follows from Lemma \ref{LemLocalDim}.
\end{proof}
Recall that an integral domain $D$ is said to be \textit{locally PVD}, if $D_\mathfrak{m}$ is a PVD for all maximal ideals $\mathfrak{m}$ of $D.$ Note that while PVDs and Pr\"ufer domains are locally PVD, a locally PVD  needs not be a PVD or a Pr\"ufer domain, as illustrated in \cite[Example 2.5]{DF83}. Alternatively, the ring of integers $\mathbb{Z}$ is a locally PVD that is not a PVD, and any PVD that is not a valuation domain serves as an example of a locally PVD that is not Pr\"ufer. 

\begin{corollary}
Let $D$ be a locally PVD of finite character. If $D$ has finite residue fields and $E$ is a subset of $D$, then $$\dim(\intR(E,D))=1+\dim(D).$$
\end{corollary}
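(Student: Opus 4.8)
The plan is to read this off directly from Proposition \ref{lpsvlf}, and then to collapse the upper bound it provides using the fact that valuation domains are Jaffard together with the identity relating the dimension of a PVD to that of its associated valuation overring.

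First I would verify that the standing hypotheses of Lemma \ref{tlf}, and hence of Proposition \ref{lpsvlf}, are satisfied. By assumption $D$ has finite character, so it remains to produce, for each maximal ideal $\mathfrak{m}$ of $D$, an overring $W_\mathfrak{m}$ of $D_\mathfrak{m}$ sharing the maximal ideal $\mathfrak{m}D_\mathfrak{m}$ and in which $\mathfrak{m}D_\mathfrak{m}$ is prime. Since $D$ is locally PVD, $D_\mathfrak{m}$ is a PVD, so by the structure theorem of Hedstrom and Huckaba \cite{HH78} either $D_\mathfrak{m}$ is already a valuation domain, in which case we may take $W_\mathfrak{m}=D_\mathfrak{m}$, or $D_\mathfrak{m}$ shares its maximal ideal with its associated valuation overring; in both cases the required $W_\mathfrak{m}$ exists, and in both cases $W_\mathfrak{m}$ is a valuation domain. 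Since moreover $D$ has finite residue fields and $E\subseteq D$, Proposition \ref{lpsvlf} applies and gives
$$1+\dim(D)\leqslant \dim(\intR(E,D))\leqslant \max\{1+\dim_v(W_\mathfrak{m});\ \mathfrak{m}\in \mathrm{Max}(D)\}.$$

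Next I would bound the right-hand side. For each maximal ideal $\mathfrak{m}$, the domain $W_\mathfrak{m}$ is a valuation domain, so every valuation overring of $W_\mathfrak{m}$ is one of its localizations and has Krull dimension at most $\dim(W_\mathfrak{m})$; hence $\dim_v(W_\mathfrak{m})=\dim(W_\mathfrak{m})$. On the other hand $W_\mathfrak{m}$ is the associated valuation overring of the PVD $D_\mathfrak{m}$, and a PVD has the same Krull dimension as its associated valuation overring (the well-known fact already invoked in the proof of Corollary \ref{CorPVD}), so $\dim(W_\mathfrak{m})=\dim(D_\mathfrak{m})\leqslant \dim(D)$. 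Therefore $\dim_v(W_\mathfrak{m})\leqslant \dim(D)$ for every $\mathfrak{m}$, the displayed upper bound is at most $1+\dim(D)$, and combining it with the lower bound yields $\dim(\intR(E,D))=1+\dim(D)$.

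I expect no genuine obstacle: the corollary is essentially an immediate specialization of Proposition \ref{lpsvlf}, the only points requiring (minor) care being the two identities $\dim_v(W_\mathfrak{m})=\dim(W_\mathfrak{m})$ and $\dim(W_\mathfrak{m})=\dim(D_\mathfrak{m})$. Finally, should $\dim(D)$ be infinite, the asserted equality still holds in the usual convention $1+\infty=\infty$: by Proposition \ref{kd} (and the remark following it) the inequality $\dim(\intR(E,D))\geqslant 1+\dim(D)$ already forces $\dim(\intR(E,D))$ to be infinite, so Proposition \ref{lpsvlf} is not even needed in that case.
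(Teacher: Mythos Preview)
Your argument is correct and follows essentially the same route as the paper: apply Proposition \ref{lpsvlf}, use that each $W_\mathfrak{m}$ is a valuation domain so $\dim_v(W_\mathfrak{m})=\dim(W_\mathfrak{m})=\dim(D_\mathfrak{m})$, and conclude. Your version is simply more explicit in checking the hypotheses of Lemma \ref{tlf} and in handling the infinite-dimensional case; one small slip is the attribution ``Hedstrom and Huckaba'' for \cite{HH78}, which should read Hedstrom and Houston.
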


\begin{proof}
In this situation, $W_{\mathfrak{m}}$ is a valuation domain for all maximal ideals $\mathfrak{m}$ of $D$. Then, $\dim(W_{\mathfrak{m}})=\dim(D_{\mathfrak{m}}),$ and hence, by Proposition \ref{lpsvlf},  $\dim(\intR(E,D))\leqslant \max\left\{1+\dim(D_\mathfrak{m});\,\mathfrak{m}\in\mathrm{Max}(D)\right\}=1+\dim(D).$
Therefore, from Proposition \ref{kd}, we deduce that $\dim(\intR(E,D))=1+\dim(D).$
\end{proof}
 \begin{remark}
 \rm(1) We note that $\mathbb{Z}$ (or more generally, the ring of integers  $\mathbb{Z}_K$ of a number field $K$) is an example of a locally PVD with finite character, finite residue fields, and infinitely many maximal ideals. However, no example of a locally PVD with these properties has been found that is not a Jaffard domain.

 \rm(2) It is noteworthy that the aforementioned result extends  Corollary \ref{CorPVD}.
 \end{remark}

\bigskip

\textbf{Acknowledgements.} 
The authors are deeply grateful to the referee for providing valuable and detailed  comments that have greatly helped to improve this paper.

\end{document}